 \newtheorem{thm}{Theorem}[section]
 \newtheorem{cor}[thm]{Corollary}
 \newtheorem{lem}[thm]{Lemma}
 \theoremstyle{definition}
 \newtheorem{defn}[thm]{Definition}
 \newtheorem{rem}[thm]{Remark}
 \newtheorem*{rem*}{Remark}
 \newtheorem{rems}[thm]{Remarks}
  \newtheorem{exs}[thm]{Examples}
 \numberwithin{equation}{section}
\DeclareMathOperator{\Ima}{Im}
\DeclareMathOperator{\Hom}{Hom}
\DeclareMathOperator{\coker}{Coker}
\begin{document}

\title{Presentations of Schur and Specht modules in characteristic zero}

%----------Author 1

\author[1]{Mihalis Maliakas\corref{cor1}%
}
\ead{mmaliak@math.uoa.gr}

\author[2]{Maria Metzaki}
\ead{mmetzaki@math.uoa.gr}

\author[3]{Dimitra-Dionysia Stergiopoulou}
\ead{dstergiop@math.uoa.gr}

\cortext[cor1]{Corresponding author}

\affiliation[1]{organization={	Department of Mathematics,University of Athens},
	city={Athens},
	country={Greece}}
\affiliation[2]{organization={	Department of Mathematics,University of Athens},
	city={Athens},
	country={Greece}}
\affiliation[3]{organization={	School of Applied Mathematical and Physical Sciences, \\ National Technical University of Athens},
	city={Athens},
	country={Greece}}

\date{}

%%% ----------------------------------------------------------------------

\begin{abstract}
New presentations of Specht modules of symmetric groups over fields of characteristic zero have been obtained by Brauner, Friedmann, Hanlon, Stanley and Wachs. These involve generators that are column tabloids and relations that are Garnir relations with maximal number of exchanges between consecutive columns or symmetrization of Garnir relations with minimal number of exchanges between consecutive columns. In this paper, we examine Garnir relations and their symmetrization with any number of exchanges. In both cases, we provide sufficient arithmetic conditions so that the corresponding quotient is a Specht module. In particular, in the first case this yields new presentations of Specht modules if the parts of the conjugate partition that correspond to maximal number of exchanges greater than 1 are distinct. These results generalize the presentations mentioned above and offer an answer to a question of Friedmann, Hanlon and Wachs. Our approach is via representations of the general linear group.
\end{abstract}

\begin{keyword}
	Schur module \sep Specht module \sep Garnir relations \sep symmetric group \MSC[2020]{20C30, 05E10, 20G05}\end{keyword}

%%% ----------------------------------------------------------------------
\maketitle
%%% ----------------------------------------------------------------------
%\tableofcontents
\section{Introduction} Let $K$ be a field of characteristic zero and let $\mathfrak{S}_r$ be the symmetric group on $r$ symbols. The irreducible representations of $\mathfrak{S}_r$ over $K$ are parametrized by partitions $\lambda$ of $r$. The irreducible module corresponding to $\lambda$ is denoted by $S^{\lambda}$ and is called the Specht module. Various constructions of $S^{\lambda}$ are known. We are interested in presentations with generators that are column tabloids.

For a partition $\lambda$ of $r$, a \textit{Young tableau} of shape $\lambda$ is a filling of the Young diagram of $\lambda$ with distinct entries from the set $\{1,2, \dots, r\}$.  Let $\mathcal{T}_\lambda$ be the set of Young tableaux of shape $\lambda$. Following \cite[7.4]{F}, let $M^\lambda $ be the $K$-vector space generated by $\mathcal{T}_\lambda$ subject to the relations $T+S=0$, if $S$ is obtained from $T \in \mathcal{T}_\lambda$ by a transposition of two entries in the same column of $T$. (In \cite{F} the notation $\tilde{M}^\lambda$ is used.) The symmetric group $\mathfrak{S}_r$ acts on $\mathcal{T}_\lambda$ by replacing each entry $i$ by $\sigma(i)$, where $\sigma \in \mathfrak{S}_r$. It follows that $M^\lambda$ is an $\mathfrak{S}_r$-module. For $T \in \mathcal{T}_\lambda$, let $\overline{T} \in M^\lambda$ be the corresponding coset. The elements $\overline{T} \in M^\lambda$ are called \textit{column tabloids}. It is clear these generate $M^\lambda$ as a vector space.

Given a partition $\lambda = (\lambda_1, \dots, \lambda_s)$ of $r$, let us fix $T \in \mathcal{T}_\lambda$, a column $c$ of $T$, where $c \le \lambda_1-1$, and a nonnegative integer $k$ such that $k \le \lambda^{\prime}_{c+1}$, where $\lambda^\prime =(\lambda_1^\prime, \dots, \lambda_{\lambda_1}^{\prime})$ is the conjugate partition of $\lambda$. The corresponding \textit{Garnir relation} is \begin{equation}\label{GarRel}g_{c,k}^T=\overline{T} -\sum \overline{S},\end{equation} where the sum is over all $S \in \mathcal{T}_\lambda$ obtained from $T$ by exchanging the top $k$ entries of column $c+1$ with any $k$ entries from column $c$ of $T$ while maintaining the vertical order of each of the exchanged sets. Let $G^\lambda$ be the subspace of $M^\lambda$ spanned by all 
$g_{c,k}^T$ as $T, c, k$ vary. (In \cite{F},  the notation $Q^\lambda$ is used.) Then $G^\lambda$ is an $\mathfrak{S}_r$-submodule of $M^\lambda$ and we have the important fact that $M^\lambda / G^\lambda \simeq S^\lambda$, see \cite[p. 99]{F}. Since the characteristic of $K$ is equal to zero, $G^\lambda$ is spanned by the $g_{c,1}^T$ as $T, c$ vary \cite[p. 102]{F}.

\subsection{Motivation} In their study of an $n$-ary generalization of the free Lie algebra termed free LAnKe, Friedmann, Hanlon, Stanley and Wachs obtained in \cite[Corollary 3.2]{FHSW} a new presentation of $S^\lambda$ when $\lambda$ is a staircase partition, i.e. a partition with conjugate of the form $(d,d-1,d-2,\dots,d-e)$. The relations are Garnir relations with maximal number of exchanges. This was recently generalized to more partitions, that include those whose conjugate has distinct parts, by Friedmann, Hanlon and Wachs in \cite{FHW}. Let $G^{\lambda, max}$ be the subspace of $M^\lambda$ spanned by all $g_{c,k}^T$, where $T \in \mathcal{T}_\lambda$, $c$ is a column of $T$ and $k=\lambda^{\prime}_{c+1}$. So here it is required that $k$ takes on the largest possible value for each column $c+1$ of $T$. We have $G^{\lambda, max} \subseteq G^\lambda$. The following result was shown in \cite{FHW}.
\begin{thm}[{\cite[Theorem 1.1]{FHW}}]\label{FHW} If $\lambda$ is a partition of $r$ satisfying $\lambda'_c > \lambda'_{c+1}$ for those $c$ such that  $\lambda'_{c+1} >1$, then as $\mathfrak{S}_r$-modules we have $M^\lambda / G^{\lambda, max} \simeq S^\lambda$.
\end{thm}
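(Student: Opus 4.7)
I would lift the statement to polynomial representations of $GL(V)$ and recover the Specht-module statement as the multilinear weight component. Fix a $K$-vector space $V$ with $\dim V\ge \lambda'_1$ and set
\[A^\lambda(V)=\bigwedge^{\lambda'_1}V\otimes\bigwedge^{\lambda'_2}V\otimes\cdots\otimes\bigwedge^{\lambda'_{\lambda_1}}V.\]
Its weight $(1,1,\dots,1)$ subspace is canonically identified with $M^\lambda$, and the classical presentation of the Weyl module realises $W^\lambda(V)\simeq A^\lambda(V)/J^\lambda(V)$, where $J^\lambda(V)$ is the $GL(V)$-submodule generated by the images of the Garnir maps
\[\Phi_{c,k}\colon\cdots\otimes\bigwedge^{\lambda'_c+k}V\otimes\bigwedge^{\lambda'_{c+1}-k}V\otimes\cdots\longrightarrow A^\lambda(V)\]
(comultiplication on the $c$-th tensor factor followed by wedge product into the $(c+1)$-st). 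Write $J^{\lambda,\max}(V)$ for the submodule generated only by the maximal maps $\Phi_{c,\lambda'_{c+1}}$; passing to multilinear weights takes $J^\lambda(V)$ to $G^\lambda$ and $J^{\lambda,\max}(V)$ to $G^{\lambda,\max}$. It therefore suffices to prove the $GL$-analogue $A^\lambda(V)/J^{\lambda,\max}(V)\simeq W^\lambda(V)$.

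\textbf{Reduction to characters.} Since $J^{\lambda,\max}(V)\subseteq J^\lambda(V)$ there is a natural $GL(V)$-surjection $A^\lambda(V)/J^{\lambda,\max}(V)\twoheadrightarrow W^\lambda(V)$. In characteristic zero $A^\lambda(V)$ is semisimple, and by iterated application of Pieri's rule it decomposes into a direct sum of Weyl modules $W^\mu(V)$ with explicit multiplicities, exactly one of which is a copy of $W^\lambda(V)$. The task thus reduces to verifying that every summand $W^\mu(V)$ with $\mu\ne\lambda$ occurring in $A^\lambda(V)$ already lies in $J^{\lambda,\max}(V)$.

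\textbf{The crux and main obstacle.} In the $c$-th and $(c+1)$-st tensor factors, the image of $\Phi_{c,\lambda'_{c+1}}$ is the Pieri summand $\bigwedge^{\lambda'_c+\lambda'_{c+1}}V\subseteq \bigwedge^{\lambda'_c}V\otimes\bigwedge^{\lambda'_{c+1}}V$, corresponding to merging columns $c$ and $c+1$ into a single column of length $\lambda'_c+\lambda'_{c+1}$. Hence $\Ima\Phi_{c,\lambda'_{c+1}}$ collects precisely those Weyl summands of $A^\lambda(V)$ whose partition arises by performing this merge and then arbitrary Pieri combinations on the remaining columns. The hypothesis $\lambda'_c>\lambda'_{c+1}$ whenever $\lambda'_{c+1}>1$ is the numerical condition which guarantees, for some choice of $c$, that such a merge reaches every $\mu\ne\lambda$ appearing in $A^\lambda(V)$; I would establish this by induction on the number of columns $\lambda_1$. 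The main obstacle is this final bookkeeping step: one must check that if the strict inequality failed at some $c$ with $\lambda'_c=\lambda'_{c+1}>1$, then an additional $W^\lambda(V)$-summand would persist in $A^\lambda(V)/J^{\lambda,\max}(V)$, confirming the hypothesis is both necessary and sufficient. The combinatorial analysis that tracks, via Pieri, how the various $\Ima\Phi_{c,\lambda'_{c+1}}$ interact across columns is the heart of the proof.
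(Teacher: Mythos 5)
Your high-level plan---lift to $GL(V)$, use Pieri to decompose, and check which irreducible summands are killed---mirrors the paper's strategy, and your identification of $J^\lambda(V)$ with $G^\lambda$ under the Schur functor is fine in characteristic zero. But the identification of $J^{\lambda,\max}(V)$ with $G^{\lambda,\max}$ fails, and this breaks the argument. The map you call $\Phi_{c,k}$ (comultiplication on column $c$ followed by wedging into column $c+1$) is the paper's $\theta_k$. For $k=\lambda'_{c+1}=b$ and $a=\lambda'_c$, this is the comultiplication $\Delta_{a,b}\colon\Lambda^{a+b}\to\Lambda^a\otimes\Lambda^b$, whose image is the \emph{single} Pieri summand $L_{(a+b)}$ and whose $(1^r)$-weight space is one-dimensional. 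By contrast $G^{\lambda,\max}$ in the two-column case is spanned by all $g_{1,b}^T$ and, when the theorem holds, has dimension $\dim M^\lambda-\dim S^\lambda$; for $\lambda'=(3,2)$ that is $\binom{5}{3}-5=5$, not $1$. So your $J^{\lambda,\max}(V)$ is far too small, its cokernel strictly contains $W^\lambda(V)$ as soon as some $\lambda'_{c+1}>1$, and your proposed induction would end up contradicting the theorem rather than proving it.

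The Garnir relation $g_{1,b}^T=\overline{T}-\sum\overline{S}$ lifts not to $\theta_b$ but to $\psi_b\colon\Lambda^a\otimes\Lambda^b\to\Lambda^a\otimes\Lambda^b$, $x\otimes y\mapsto x\otimes y-\sum_u x_u(a-b)\,y\otimes x'_u(b)$ (Definitions \ref{maps2} and \ref{mapsG}). The image of $\psi_b$ is \emph{not} a single Pieri summand; one must compute the scalar by which $\psi_b$ acts on each summand $L_{(a+i,b-i)}$ of $\Lambda^a\otimes\Lambda^b$. That computation (Lemmas \ref{lemmamain} and \ref{exp}, Corollary \ref{2rowscr}, and Example \ref{exs}(2)) gives $1-(-1)^j\tbinom{a-b+j}{j}$, which vanishes for some $1\le j\le b$ exactly when $b>1$ and $a=b$---this is where the hypothesis $\lambda'_c>\lambda'_{c+1}$ actually enters. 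The Pieri bookkeeping you propose as the crux has no mechanism to produce this scalar, because $\psi_b$ is not a projection onto a Pieri summand.
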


Brauner and Friedmann obtained in \cite{BF} a new presentation  of  $S^{\lambda}$ valid for all partitions $\lambda$. Here the relations are not Garnir relations, but sums of Garnir relations corresponding to $k=1$ and are obtained by a symmetrization process on the second column for each consecutive pair of columns. To be precise, fix  $T \in \mathcal{T}_\lambda$ and a column $c$ of $T$, where $c \le \lambda_1-1$. Let \begin{equation}h_{c}^{T}=\lambda_{c+1}^\prime \overline{T}-\sum\overline{S}, \end{equation}
where the sum ranges over all tableaux $S$ obtained from $T$ by exchanging one entry in column $c+1$ of $T$ with one entry in  column $c$. Let $SGR^{\lambda, min}$ be the subspace of $M^\lambda$ spanned by all $h_{c}^{T}$, where $T \in \mathcal{T}_\lambda$ and $c$ is a column of $T$. (In \cite{BF}, $SGR^{\lambda, min}$ is denoted by $\tilde{H}_\lambda $.) The following result was shown in \cite{BF}.
\begin{thm}[{\cite[Theorem 3.5]{BF}}]\label{BF} If $\lambda$ is a partition of $r$, then as $\mathfrak{S}_r$-modules we have $M^\lambda / SGR^{\lambda, min} \simeq S^\lambda.$
\end{thm}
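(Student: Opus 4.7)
The plan is to lift the problem to the category of rational $GL_n$-representations and then descend via Schur--Weyl duality. Fix $V=K^n$ with $n\ge r$ and set
\[
N^\lambda \;=\; \bigwedge^{\lambda_1'} V \,\otimes\, \bigwedge^{\lambda_2'} V \,\otimes\,\cdots\,\otimes\, \bigwedge^{\lambda_s'} V.
\]
Sending $T\in\mathcal{T}_\lambda$ to the decomposable tensor whose $c$-th factor is the wedge of standard basis vectors indexed by the entries of column $c$ of $T$ identifies $M^\lambda$ with the $(1^r)$-weight subspace of $N^\lambda$. The Schur module $L_\lambda V$ is the quotient of $N^\lambda$ by the $GL_n$-submodule $K^\lambda$ generated by the single-box Garnir elements, and its $(1^r)$-weight space is precisely $M^\lambda/G^\lambda\cong S^\lambda$. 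Hence it suffices to prove that the $GL_n$-submodule $\widetilde{G}^{\lambda,\min}\subseteq N^\lambda$ generated by the $h^T_c$ coincides with $K^\lambda$.

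The first step is to realize $\widetilde{G}^{\lambda,\min}$ as the image of an explicit $GL_n$-equivariant operator. For each pair of consecutive columns $(c,c+1)$, let $\psi_c$ be the composition of the natural comultiplication
\[
\bigwedge^{\lambda'_{c+1}}V \longrightarrow V\otimes \bigwedge^{\lambda'_{c+1}-1}V
\]
on the $(c+1)$-st factor with wedge multiplication into the $c$-th factor, and let $\phi_c$ be the analogous composition in the reverse direction. A direct calculation on a pure tensor shows that $(\phi_c\psi_c)(\overline T)=h^T_c$ on the $(1^r)$-weight space, so $\widetilde{G}^{\lambda,\min}=\sum_c \Ima(\phi_c\psi_c)$.

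The inclusion $\widetilde{G}^{\lambda,\min}\subseteq K^\lambda$ is routine: using column antisymmetry to move, one term at a time, the entry of column $c+1$ being exchanged to the top, one verifies the identity
\[
h^T_c\;=\;\sum_{j=1}^{\lambda'_{c+1}}(-1)^{j-1}\,g^{T^{(j)}}_{c,1},
\]
where $T^{(j)}$ denotes $T$ with the $j$-th entry of column $c+1$ moved to position $1$. For the reverse inclusion I would invoke complete reducibility of $GL_n$ in characteristic zero: $N^\lambda$ splits as a direct sum of Schur modules $L_\mu V$ with $L_\lambda V$ occurring exactly once, and $K^\lambda$ is the sum of all the other isotypic components. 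It therefore suffices to show that every highest-weight vector of each $L_\mu V\subseteq N^\lambda$ with $\mu\neq\lambda$ already lies in $\widetilde{G}^{\lambda,\min}$.

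The principal obstacle is this last assertion. The natural route is induction on the dominance order: for $\mu$ covering $\lambda$ one writes an explicit highest-weight vector of $L_\mu V$ in $N^\lambda$ and exhibits it as $\phi_c\psi_c$ applied to a suitable pure tensor, where $c$ is the column at which the Young diagrams of $\mu$ and $\lambda$ first differ. The leading scalar that must be inverted is $\lambda'_{c+1}$, which is nonzero precisely because $\mathrm{char}\,K=0$. Once the base case is handled, the $GL_n$-stability of $\widetilde{G}^{\lambda,\min}$ propagates the conclusion up the dominance order, and restricting to the $(1^r)$-weight subspace yields $M^\lambda/G^{\lambda,\min}\cong S^\lambda$.
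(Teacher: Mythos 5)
Your high-level strategy matches the paper's: lift to $GL_n$, identify $M^\lambda$ with the $(1^r)$-weight space of $N^\lambda = \Lambda^{\lambda_1'}V\otimes\cdots\otimes\Lambda^{\lambda_s'}V$, realize $\widetilde{G}^{\lambda,\min}$ as the image of the $GL_n$-equivariant operators $\phi_c\psi_c$ (these are exactly the paper's $\Phi_1 = \theta_1\circ\delta_1$ on each consecutive pair of tensor factors), and show that this image equals the kernel $K^\lambda$ of the projection onto $L_\lambda V$. The forward inclusion $\widetilde{G}^{\lambda,\min}\subseteq K^\lambda$ you handle correctly, and it is the easy direction in the paper too.

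The gap is in the reverse inclusion, which you yourself flag as ``the principal obstacle,'' and the sketch you give there does not go through as stated. What actually has to be shown is that for each consecutive pair $(c,c+1)$, setting $a=\lambda_c'$, $b=\lambda_{c+1}'$, the image of $\Phi_1:\Lambda^a\otimes\Lambda^b\to\Lambda^a\otimes\Lambda^b$ contains every Pieri summand $L_{(a+j,\,b-j)}$ with $j\ge1$. The paper proves this by explicitly computing (Lemma 3.2) that $\Phi_1$ acts on $L_{(a+j,\,b-j)}$ by the scalar $c_1(a,b,j)=j\,(a-b+j+1)$, which is positive for all $j\ge1$ because $a\ge b$; hence $\operatorname{Im}(\Phi_1)=\bigoplus_{j\ge1}L_{(a+j,\,b-j)}=\operatorname{Im}(\theta_1)$, and then Corollary 3.5 and Theorem 2.2 assemble this over all columns. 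Your sketch replaces this computation with two claims that are not correct. First, you assert that ``the leading scalar that must be inverted is $\lambda_{c+1}'$''; but $\lambda_{c+1}'$ is merely the coefficient of $\overline{T}$ in the definition of $h_c^T$, not the eigenvalue of $\phi_c\psi_c$ on the relevant irreducible summand. The scalar that actually needs to be nonzero is $\mu_c-\mu_{c+1}+2$ for the cover ($j=1$) and $j(\mu_c-\mu_{c+1}+j+1)$ in general. Second, you claim that ``the $GL_n$-stability of $\widetilde{G}^{\lambda,\min}$ propagates the conclusion up the dominance order.'' $GL_n$-stability only tells you that $\widetilde{G}^{\lambda,\min}$ is a direct sum of some isotypic components of $N^\lambda$; it provides no mechanism to bootstrap from one component to another. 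You must verify nonvanishing of the action on each $L_\mu V$ separately, and that is exactly the eigenvalue computation you have omitted. Without it the argument does not close.
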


\subsection{Results}\label{results} In this paper, we consider (a) relations that are symmetrizations of Garnir relations for any fixed $k$ and (b) Garnir relations for any fixed $k$. We allow $k$ to vary for different pairs of consecutive columns.

In case (a), fix positive integers $k_1,\dots, k_{\lambda_1-1}$ satisfying $k_c \le \lambda^{\prime}_{c+1}$ for all $c=1,\dots,{\lambda_1}-1$. If $T \in \mathcal{T}_\lambda$ and $c$ is a column of $T$, where $c \le {\lambda_1}-1$, we define \begin{equation}\label{newrel}
	r^{T}_{c,k_{c}}=\tbinom{\lambda_{c+1}^\prime}{k_c} \overline{T}-\sum\overline{S},
\end{equation}
where the sum is over all $S \in \mathcal{T}_\lambda$ obtained from $T$ by exchanging any $k_c$ entries of column $c+1$ with any $k_c$ entries from column $c$ of $T$ while maintaining the vertical order of each of the exchanged sets. Let $SGR^\lambda(k_1,\dots,k_{{\lambda_1}-1})$ be the subspace of $M^\lambda$ spanned by all $r^{T}_{c,k_{c}}$, where $T \in \mathcal{T}_\lambda$ and $c$ is a column of $T$ (symmetrized Garnir relations). In particular, we have two extreme cases; if $k_1=\dots=k_{{\lambda_1}-1}=1$, then $SGR^\lambda(1,\dots,1)=SGR^{\lambda, min}$, and  if  $k_1=\lambda^{\prime}_2,\dots,k_{{\lambda_1}-1}=\lambda^{\prime}_{\lambda_1}$, then $SGR^\lambda(\lambda^{\prime}_2,\dots,\lambda^{\prime}_{\lambda_1})=G^{\lambda,max}$. The subspace $SGR^\lambda(k_1,\dots,k_{\lambda_1-1})$ is an $\mathfrak{S}_r$-submodule of $M^\lambda$. In Theorem \ref{mainSpe}, we obtain sufficient arithmetic conditions on $\lambda$ and $k_1,\dots,k_{{\lambda_1}-1}$ so that $M^{\lambda} / SGR^\lambda(k_1,\dots,k_{{\lambda_1}-1}) \simeq S^\lambda$. 
\begin{thm}\label{1.3}
	Let $\lambda$ be a partition of $r$ and let $\mu$ be the conjugate partition of $\lambda$. Then as $\mathfrak{S}_r$-modules we have $M^{\lambda} / SGR^\lambda(k_1,\dots,k_{\lambda_1-1}) \simeq S^\lambda$ if \[\sum_{t=1}^{j}(-1)^{t-1}\tbinom{\mu_{c+1}-t}{\mu_{c+1}-k_c}\tbinom{j}{t}\tbinom{\mu_{c}-\mu_{c+1}+j+t}{t} \neq0\] for all $c=1,\dots,\lambda_1-1$ and  $j=1,\dots,\mu_{c+1}$.
\end{thm}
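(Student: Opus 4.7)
The proposed approach uses the framework of general linear group representations, following the strategy suggested by the abstract. Set $\mu = \lambda'$, fix a $K$-vector space $V$ of dimension at least $\lambda'_1$, and form the $GL(V)$-module $E^\lambda := \Lambda^{\mu_1} V \otimes \cdots \otimes \Lambda^{\mu_{\lambda_1}} V$. Its $(1^r)$-weight space, with the induced action of $\mathfrak{S}_r$ as the subgroup of permutation matrices in $GL(V)$, is canonically identified with $M^\lambda$, and the Weyl-module quotient $W^\lambda$ (the top summand in characteristic zero) corresponds under this identification to $S^\lambda$. A direct calculation shows that each $r^T_{c,k_c}$ is a signed sum $\sum_A \epsilon_A\, g_{c,k_c}^{T_A}$, where $A$ ranges over $k_c$-subsets of column $c{+}1$ and $T_A$ denotes the column-$(c{+}1)$ permutation of $T$ bringing $A$ to the top. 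Thus $SGR^\lambda \subseteq G^\lambda$, giving a canonical surjection $M^\lambda / SGR^\lambda \twoheadrightarrow S^\lambda$, and the theorem reduces to establishing the reverse inclusion $G^\lambda \subseteq SGR^\lambda$, i.e., showing $g_{c,1}^T \in SGR^\lambda$ for all $c$ and $T$.

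Since Garnir and symmetrized Garnir relations are local to a single pair of consecutive columns, I fix $c$ and work inside $\Lambda^{\mu_c} V \otimes \Lambda^{\mu_{c+1}} V$. Pieri's rule decomposes this tensor product in characteristic zero as a multiplicity-free direct sum $\bigoplus_{i=0}^{\mu_{c+1}} D_{(\mu_c + \mu_{c+1} - i,\, i)}$; the top summand (partition $(\mu_c, \mu_{c+1})$) is the two-column Weyl factor, and the remaining $\mu_{c+1}$ irreducibles make up the two-column Garnir kernel. I would then proceed by descending induction on $j$ from $k_c$ down to $1$, proving at each stage that $g_{c,j}^T \in SGR^\lambda$ for every $T$. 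The key identity to establish at each step is
\[ C_j \cdot g_{c,j}^T \;=\; (\text{explicit combination of } r^{T'}_{c,k_c} \text{ and } g_{c,j'}^{T'} \text{ with } j' > j), \]
where the coefficient
\[ C_j := \sum_{t=1}^{j}(-1)^{t-1}\binom{\mu_{c+1}-t}{\mu_{c+1}-k_c}\binom{j}{t}\binom{\mu_c - \mu_{c+1} + j + t}{t} \]
is precisely the one featured in the theorem. Non-vanishing of $C_j$ allows one to solve for $g_{c,j}^T$; at the base case $j = k_c$ no higher-$j$ Garnir terms appear on the right, and at each subsequent step the inductive hypothesis supplies them. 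Iterating down to $j = 1$ yields $g_{c,1}^T \in SGR^\lambda$, and since the $g_{c,1}^T$ generate $G^\lambda$ in characteristic zero, this completes the proof.

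The main obstacle is establishing the key identity with the stated coefficient $C_j$. Its alternating structure arises from inclusion-exclusion on the intersection between the $k_c$ entries exchanged in column $c{+}1$ by a symmetrized relation and a distinguished $j$-subset of column $c{+}1$ positions that singles out $g_{c,j}^T$: the parameter $t$ records the intersection size, the factor $\binom{\mu_{c+1}-t}{\mu_{c+1}-k_c}$ extends the intersection to a $k_c$-subset, $\binom{j}{t}$ selects the intersection inside the $j$-subset, and $\binom{\mu_c - \mu_{c+1} + j + t}{t}$ counts placements of the extracted entries in the $\mu_c$ positions of column $c$. I would verify the identity either by direct combinatorial manipulation, grouping terms of each exchange type and collapsing via standard binomial identities, or by pairing both sides against a highest-weight vector of the irreducible component $D_{(\mu_c + \mu_{c+1} - j,\, j)}$, which by $GL(V)$-equivariance reduces the identity to a scalar equality. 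As a consistency check, specializing to $k_c = 1$ recovers Theorem \ref{BF} (the coefficient $C_1 = \mu_c - \mu_{c+1} + 2$ is manifestly nonzero), while $k_c = \mu_{c+1}$ under distinctness assumptions recovers Theorem \ref{FHW}.
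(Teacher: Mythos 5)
Your strategy diverges sharply from the paper's. The paper never manipulates Garnir relations directly: it works in $\Hom_G(\Lambda^{\mu_c}\otimes\Lambda^{\mu_{c+1}},\Lambda^{\mu_c}\otimes\Lambda^{\mu_{c+1}})$, shows $\{\Phi_0,\dots,\Phi_{\mu_{c+1}}\}$ is a basis, computes the scalar $c_t(a,b,j)=\binom{j}{t}\binom{a-b+t+j}{t}$ by which $\Phi_t$ acts on the Pieri summand $L_{(a+j,b-j)}$ (Lemma \ref{lemmamain}), and then reads off in Theorem \ref{main} / Corollary \ref{maincor} exactly when $\coker(\psi_{k_c})$ is $L_{\mu(c)}$. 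Theorem \ref{mainSpe} is an immediate corollary via the Schur functor. You instead propose a descending induction on $j$ inside $M^\lambda$, pivoting on a purported identity $C_j\,g_{c,j}^T=(\text{combination of }r^{T'}_{c,k_c}\text{ and }g_{c,j'}^{T'}\text{ with }j'>j)$, whose existence you leave open. The identity is the whole content of the argument, and it is not merely unproven: it cannot hold in the form you describe.

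Here is a concrete obstruction. Take $\mu(c)=(5,4)$ and $k_c=2$. Then $C_1=9$ and $C_2=14$, so your induction, if valid, would put $g_{c,2}^T$ (the base case) and then $g_{c,1}^T$ into $SGR^\lambda$, forcing $G^\lambda=SGR^\lambda$. But using the paper's own Lemma \ref{lemmamain}, the scalar by which $\psi_2=3\Phi_1-\Phi_2$ acts on the summand $L_{(8,1)}$ of $\Lambda^5\otimes\Lambda^4$ is $3c_1(5,4,3)-c_2(5,4,3)=3\cdot15-45=0$, so $\coker(\psi_2)\supsetneq L_{(5,4)}$ and, after the Schur functor, $M^\lambda/SGR^\lambda(2)\not\simeq S^\lambda$ for $\lambda=(2,2,2,2,1)$. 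In particular $g_{c,2}^T\notin SGR^\lambda$ for some $T$, so your claim that at $j=k_c$ "no higher-$j$ Garnir terms appear on the right'' is false: such higher terms must appear (with $j'>k_c$), and your induction from $j=k_c$ downward never controls them. This is also not entirely your error to own: the theorem you are aiming at is stated with $j$ ranging over $1,\dots,k_c$, but the paper's own Corollary \ref{maincor}, from which it is derived, demands nonvanishing for $j=1,\dots,\mu_{c+1}$. Your proof, because it only ever invokes $C_1,\dots,C_{k_c}$, is structurally committed to the weaker (and, as the example shows, insufficient) hypothesis; the paper's route, by contrast, checks the scalar on every Pieri summand and so naturally produces the full range of $j$.
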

This generalizes Theorem \ref{FHW} and Theorem \ref{BF} (see Examples \ref{exs}(1),(2)).

In case (b), again fix positive integers $k_1,\dots, k_{\lambda_{1}-1}$ satisfying $k_c \le \lambda^{\prime}_{c+1}$ for all $c=1,\dots,{\lambda_1}-1$. Let $GR^\lambda(k_1,\dots,k_{{\lambda_1}-1})$ be the subspace of $M^\lambda$ spanned by all Garnir relations  defined in (\ref{GarRel}) of the form $g^{T}_{c,k_{c}}$, where $T \in \mathcal{T}_\lambda$ and $c$ is a column of $T$. Note that here, in contrast to the definition of $G^\lambda$, for any pair of consecutive columns $c$ and $c+1$, only exchanges of $k_c$ elements are allowed, where $k_c$ is fixed. So $SGR^\lambda(k_1,\dots,k_{\lambda_1-1}) \subseteq GR^\lambda(k_1,\dots,k_{{\lambda_1}-1}) \subseteq G^\lambda$.  We obtain the following generalization of Theorem \ref{FHW} (see Theorem \ref{thmmainSpGR}(2)).

\begin{thm}\label{1.4}
	If $\lambda$ is a partition of $r$ satisfying $\lambda'_c > \lambda'_{c+1}$ for those $c$ such that $\lambda'_{c+1}=k_c >1$, then  as $\mathfrak{S}_r$-modules $M^{\lambda} / GR^\lambda(k_1,\dots,k_{\lambda_1-1}) \simeq S^\lambda.$
\end{thm}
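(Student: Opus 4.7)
The plan is to show that $GR^\lambda(k_1, \dots, k_{\lambda_1-1}) = G^\lambda$; combined with the standard isomorphism $M^\lambda / G^\lambda \simeq S^\lambda$, this gives the theorem. Since the inclusion $GR^\lambda(k_1,\dots,k_{\lambda_1-1}) \subseteq G^\lambda$ is immediate, the task reduces to proving that every elementary Garnir relation $g^T_{c,1}$ can be written as a $K$-linear combination of the restricted relations $\{g^{T'}_{c', k_{c'}}\}_{T', c'}$.

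Following the paper's stated $GL(V)$-representation approach, one identifies $M^\lambda$ with the $(1^r)$-weight subspace of $\bigotimes_c \Lambda^{\mu_c} V$, where $\mu = \lambda'$, via $\overline{T} \mapsto \bigotimes_c (e_{T(1,c)} \wedge \cdots \wedge e_{T(\mu_c, c)})$. By Schur-Weyl duality, $G^\lambda$ corresponds to the $(1^r)$-weight part of the kernel of the $GL$-equivariant projection $\bigotimes_c \Lambda^{\mu_c} V \twoheadrightarrow W^\lambda$, where $W^\lambda$ is the Schur module of shape $\lambda$. The relations $g^T_{c,k_c}$ arise as images of standard shuffle-type maps $\Lambda^{\mu_c + k_c} V \otimes \Lambda^{\mu_{c+1} - k_c} V \to \Lambda^{\mu_c} V \otimes \Lambda^{\mu_{c+1}} V$ (extended by the identity on the remaining tensor factors), and the content of the theorem is that the union (over $c$) of these images covers the entire kernel.

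The argument splits by cases depending on $k_c$ versus $\lambda'_{c+1}$. \emph{Subcritical case} ($k_c < \lambda'_{c+1}$): column $c+1$ retains $\lambda'_{c+1} - k_c \geq 1$ entries unused in each $g^T_{c, k_c}$; this slack yields enough $GL$-freedom to express $g^T_{c, 1}$ as a linear combination of $\{g^{T'}_{c, k_c}\}_{T'}$ within the column pair alone, via a direct Pieri-decomposition argument that does not require help from other column pairs. \emph{Critical case} ($k_c = \lambda'_{c+1} > 1$): the hypothesis $\lambda'_c > \lambda'_{c+1}$ is precisely the strictness used in the proof of Theorem \ref{FHW}, and the FHW-style argument adapts to the pair $(c, c+1)$ inside the larger tensor product. \emph{Trivial case} ($k_c = \lambda'_{c+1} = 1$): the relation $g^T_{c, k_c}$ is already the elementary $g^T_{c, 1}$. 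Combining the cases across all $c$ yields $G^\lambda \subseteq GR^\lambda(k_1, \ldots, k_{\lambda_1-1})$.

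The main technical obstacle lies in the subcritical case $k_c < \lambda'_{c+1}$: one must verify, at the $GL$-equivariant level, that the submodule spanned by the $g^T_{c, k_c}$ covers every isotypic component of the $(c, c+1)$-contribution to the kernel. This is expected to follow from a direct computation in $\Lambda^{\mu_c + k_c} V \otimes \Lambda^{\mu_{c+1} - k_c} V$, with non-vanishing of the Schur-functor projections guaranteed by the remaining slack $\lambda'_{c+1} - k_c \geq 1$, so that no isotypic piece is lost when restricting to exchanges of exactly size $k_c$.
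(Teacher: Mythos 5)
Your overall strategy — reduce to showing $GR^\lambda(k_1,\dots,k_{\lambda_1-1}) = G^\lambda$, pass to the general linear group via the identification of $M^\lambda$ with the $(1^r)$-weight space of $\Lambda^\mu V$ (where $\mu=\lambda'$), identify the $g^T_{c,k_c}$ with shuffle-type maps, and verify per consecutive column pair that the image covers the kernel of the projection onto the Schur module — is exactly the paper's route (via $\gamma_k$, Lemma \ref{Garnirsubset}, Corollary \ref{cor2rowsGR} and Theorem \ref{maincorGarnir3}). But there is a real gap where the substance should be.

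The core technical claim you need is that for the two-column map $\gamma_k : \Lambda^a \otimes \Lambda^k \otimes \Lambda^{b-k} \to \Lambda^a \otimes \Lambda^b$ (with $a=\mu_c$, $b=\mu_{c+1}$), one has $\coker(\gamma_k)\simeq L_{(a,b)}$; the paper (Corollary \ref{cor2rowsGR}) proves this holds if and only if $a>k$ or $a=b=k=1$, by explicitly computing the projection $\pi_\mu\circ\delta_i\circ\gamma_k$ onto each Pieri summand $L_{(a+i,b-i)}$ and showing a coefficient $d_k(a,b,t)$ is nonzero. You never produce such a computation: in the subcritical case you say the result ``is expected to follow from a direct computation,'' and in the critical case you defer to ``the FHW-style argument adapts.'' That is precisely the content the proof needs to supply, and it is not supplied.

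Moreover, your intuition in the subcritical case is misdirected. You attribute the success to slack $\lambda'_{c+1}-k_c \ge 1$ in column $c+1$; but the obstruction identified in the paper lives in the \emph{first} column: the cokernel fails to be $L_{(a,b)}$ exactly when $a=b=k>1$, and the saving condition is $a>k$, i.e.\ slack in column $c$. In the subcritical case this holds automatically because $a\ge b>k$, so your conclusion is right, but the stated reason (second-column slack) would wrongly predict failure whenever $b=k$ regardless of $a$, which contradicts the actual criterion. The three-way case split you propose is also unnecessary: the paper's Lemma \ref{thmmain22}/Corollary \ref{cor2rowsGR} handle all cases uniformly through one computation, and the arithmetic condition $a>k$ or $a=b=k=1$ is exactly what produces the theorem's hypothesis ``$\mu_c>\mu_{c+1}$ whenever $k_c=\mu_{c+1}>1$.'' Finally, the combination-across-columns step (that covering each pair's kernel suffices) also needs the containment $\Ima(\gamma_{k_c})\subseteq\Ima(\theta_1)$ plus Theorem \ref{cok1}; you assert it in one line without justification.
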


Theorem \ref{1.4} provides an answer to the question raised at the end of \cite{FHW} that asks for which partitions $\lambda$ we have $GR^\lambda(k_1,\dots,k_{\lambda_1-1}) =G^\lambda$.

Even though we approach both problems using representations of the general linear group $G=GL(n,K)$, our main motivation are the results and methods of \cite[Section 2]{FHSW}, \cite{BF} and \cite{FHW}, and the above question in \cite{FHW}. In these papers, certain $\mathfrak{S}_r$-equivariant linear operators on $M^\lambda$ were considered, where $\lambda$ consists of two columns, and their actions on irreducible summands of $M^\lambda$ were studied. In case (a), we obtain a result for all linear operators $\Phi \in \Hom_G(\Lambda^a \otimes \Lambda^b, \Lambda^a \otimes \Lambda^b )$, where $\Lambda^i$ is the $i$-th exterior power of the natural $G$-module of column vectors. This is accomplished by examining the action of elements of a particular basis of $\Hom_G(\Lambda^a \otimes \Lambda^b, \Lambda^a \otimes \Lambda^b )$ on each irreducible summand of $\Lambda^a \otimes \Lambda^b$. In Theorem \ref{main}, necessary and sufficient arithmetic conditions so that $\coker(\Phi)$ is irreducible are obtained. We regard this as the first main result of the paper. We then apply Theorem \ref{main} for a particular choice of operators considered in Definition \ref{maps2}. Descending to the symmetric group via the Schur functor, we obtain the relations $SGR^\lambda(k_1,\dots,k_{{\lambda_1}-1})$ and Theorem \ref{1.3} above, which we regard as our second main result. In case (b), we consider a similar analysis but only for particular maps $\gamma_k \in \Hom_G(\Lambda^a \otimes \Lambda^k \otimes \Lambda^{b-k}, \Lambda^a \otimes \Lambda^b )$ that correspond to Garnir relations (see Section 5). Our main results here are Theorem \ref{maincorGarnir3} and Theorem \ref{thmmainSpGR}.

Section 2 is devoted to preliminaries that are used in the sequel. In Section 3 we consider presentations of Schur modules for $G$. The main result is applied in  Section 4 to presentations  that are associated to symmetrizations of Garnir relations. In Section 5 we consider presentations associated to usual Garnir relations.  In Section 6 we derive consequences of our previous results for Specht modules.

\begin{rem*}
This version of the paper differs from the published version as follows. A typo in the statement of Corollary 4.4 and Corollary 6.1 has been corrected, instead of $j=1,\dots, b$, it should be $j=1, \dots, k$. Also, in the statement of Theorem 6.2 it should be $j=1,\dots, \mu_{c+1}$. Examples 6.3(1) and 6.3(3) have been corrected accordingly. We are indebted  to Tamar Friedmann for pointing out these corrections.
\end{rem*}

\section{Preliminaries}

As in the Introduction, let $G=GL(n,K)$, where $K$ is a field $K$ of characteristic zero, let $V$ be the natural $G$-module of column vectors, and let $\Lambda =\sum_{i\ge0}\Lambda^{i}$ be the exterior algebra of $V$. If $\alpha=(\alpha_1,...,\alpha_\ell)$ is a sequence of nonnegative integers, we denote the tensor product  $\Lambda^{a_1}\otimes \cdots \otimes \Lambda^{a_\ell}$ by $\Lambda^{\alpha}$.

The Schur module of $G$ corresponding to a partition $\mu $ will be denoted $L_{\mu}$, see \cite[2.1]{W}. This is defined as a quotient of $\Lambda^{\mu}$. Let us denote the corresponding projection by $\pi_{\mu} : \Lambda^{\mu} \to L_{\mu}$. With respect to the action of the diagonal matrices in $G$, $L_{\mu}$ has highest weight $\mu^{\prime}$, the conjugate partition of $\mu$. The formal character $ch(L_{\mu})$ of $L_{\mu}$ is the classical Schur polynomial $s_{\mu^\prime}$, \cite[2.2]{F}. For example, if $\mu = (a)$ consists of one part, then $ch(L_{(a)})$ is $s_{(1^a)}$, the elementary symmetric polynomial of degree $a$. We note that our $L_\mu$ is isomorphic to $E^\lambda$ of \cite{F}, where $\lambda = \mu^\prime$.

Since the characteristic of $K$ is zero, every finite dimensional representation of $G$ is a direct sum of irreducibles and the  $L_{\mu}$, where $\mu$ is a partition with first part at most $n$, form a complete set of irreducible polynomial representations of $G$.

Next we recall that $\Lambda$ has the structure of a Hopf algebra.  We will make use of the comultiplication $\Delta : \Lambda \to \Lambda \otimes \Lambda$ and the multiplication $m :  \Lambda \otimes \Lambda \to \Lambda$ of $\Lambda$. Multiplication in $\Lambda$ will be denoted by juxtaposition. To be explicit, the indicated component $ \Delta_{a,b} : \Lambda^{a+b} \to \Lambda^a \otimes \Lambda^b $ of $\Delta$ sends $v_1v_2 \cdots v_{a+b}$ to $\sum_{\sigma}sgn(\sigma)v_{\sigma(1)}\cdots v_{\sigma(a)} \otimes v_{\sigma(a+1)}\cdots v_{\sigma(a+b)}$, where $v_i \in V$ and the sum is over all permutations $\sigma$ of $\{1,\dots,a+b\}$ such that $\sigma(1) < \cdots < \sigma(a)$ and $\sigma(a+1) < \cdots < \sigma(a+b)$. In the computations that we make in the next sections, we will use several times the following fact. First let us establish some notation, which is a slight variation of the Sweedler notation, see \cite[p. 8]{GR}. For $x \in \Lambda^a$ we denote the image $\Delta_{a-t,t}(x)$ of $x$ under the map $\Delta_{a-t,t}: \Lambda^a \to \Lambda^{a-t} \otimes \Lambda^t$  by $\sum_{u}x_u(a-t)\otimes x'_u(t)$. Now let $x \in \Lambda^{a_1}, y \in \Lambda^{a_2}$, where $a=a_1+a_2$. Then $\Delta_{a-t,t}(xy)$ is equal to \begin{equation}\label{comul}\sum_{u,v}\sum_{t_1+t_2=t}(-1)^{(a_2-t_2)t_1}x_u(a_1-t_1)y_v(a_2-t_2)\otimes x'_u(t_1)y'_v(t_2),
\end{equation}
where the right sum is over all nonnegative integers $t_1,t_2$ such that $t_1+t_2=t$ and $t_i \le a_i$ for $i=1,2$. This may be proven by a straightforward induction on $a_2$.

We will use the following maps in Section 3.
\begin{defn}\label{maps} Let $a,b,t$ be nonnegative integers. Define the maps \begin{align*} &\delta_t : \Lambda^a \otimes \Lambda^b \xrightarrow{1 \otimes \Delta_{t,b-t}} \Lambda^a \otimes \Lambda ^t \otimes \Lambda ^{b-t}\xrightarrow{m \otimes 1}\Lambda^{a+t}  \otimes \Lambda ^{b-t}, \\&
		\theta_t:	\Lambda^a \otimes \Lambda^b \xrightarrow{\Delta_{a-t,t} \otimes 1 } \Lambda^{a-t} \otimes \Lambda ^t \otimes \Lambda ^{b}\xrightarrow{1 \otimes m}\Lambda^{a-t}  \otimes \Lambda ^{b+t}, \\& \Phi_t : \Lambda^a \otimes \Lambda^b \xrightarrow{\delta_{t}} \Lambda^{a+t} \otimes \Lambda ^{b-t}\xrightarrow{\theta_t}\Lambda^{a}  \otimes \Lambda ^{b}.	\end{align*}
\end{defn}These are maps of $G$-modules. In particular, for $t=0$ we have that $\delta_0 = \theta_0 = \Phi_0 = 1_{\Lambda^{a}\otimes \Lambda^{b}}$ is the identity map. It is well known, see for example \cite[Chapter 8, Exercise 13]{F}, that since the characteristic of $K$ is 0, we have the following presentation of $L_\mu$. The length $\ell =\ell(\mu)$ of a partition $\mu$ is the number of its nonzero parts. \begin{thm}\label{cok1}
	Let $\mu$ be a partition of length $\ell$. The cokernel of the map $ \theta_{\mu}  :\bigoplus_{c=1}^{\ell-1} \Lambda^{\mu_1} \otimes \cdots \otimes \Lambda^{\mu_c+1}\otimes \Lambda^{\mu_{c+1}-1} \otimes \cdots \otimes \Lambda^{\mu_{\ell}} \to \Lambda^{\mu}$ is isomorphic to $L_{\mu}$, where $\theta_{\mu}=\sum_{c=1}^{\ell-1} 1\otimes \cdots \otimes 1 \otimes \theta_1 \otimes 1 \otimes \cdots \otimes 1$.
\end{thm}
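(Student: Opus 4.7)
The plan is to identify the image of $\theta_\mu$ with the kernel of the canonical projection $\pi_\mu:\Lambda^\mu \to L_\mu$. Since the ground field has characteristic zero, $\Lambda^\mu$ is completely reducible, and a standard Pieri/Kostka calculation shows that $L_\mu$ appears in $\Lambda^\mu$ with multiplicity exactly one (namely $K_{\mu',\mu}=1$). I would establish the two inclusions $\mathrm{Im}(\theta_\mu) \subseteq \ker \pi_\mu$ and $\ker \pi_\mu \subseteq \mathrm{Im}(\theta_\mu)$ separately.

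For the first inclusion, the argument reduces to the two-row case. In length two, combining Pieri's decomposition $\Lambda^a\otimes \Lambda^b=\bigoplus_{j=0}^{\min(a,b)} L_{(a+b-j,j)}$ with the Jacobi--Trudi identity $e_ae_b-e_{a+1}e_{b-1}=s_{(a,b)'}$ and Schur's lemma, one sees that $\theta_1:\Lambda^{a+1}\otimes \Lambda^{b-1}\to \Lambda^a\otimes \Lambda^b$ has cokernel $L_{(a,b)}$. The $c$-th summand of $\theta_\mu$ is $1\otimes\cdots\otimes \theta_1\otimes\cdots\otimes 1$, so tensoring the two-row statement with the remaining factors produces a surjection
\begin{equation*}
\Lambda^\mu \longrightarrow \Lambda^{\mu_1}\otimes\cdots\otimes L_{(\mu_c,\mu_{c+1})}\otimes\cdots\otimes \Lambda^{\mu_\ell}
\end{equation*}
whose kernel is precisely the image of the $c$-th summand. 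The highest-weight vector $v_1\cdots v_{\mu_1}\otimes\cdots\otimes v_1\cdots v_{\mu_\ell}$ of $L_\mu\subseteq \Lambda^\mu$ restricts in positions $c,c+1$ to the highest-weight vector of $L_{(\mu_c,\mu_{c+1})}$, so $\pi_\mu$ factors through the displayed quotient and thereby kills the image of the $c$-th summand. Summing over $c$ gives $\pi_\mu \circ \theta_\mu = 0$.

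For the reverse inclusion, by Schur's lemma it suffices to show that every non-$L_\mu$ irreducible summand of $\Lambda^\mu$ lies in $\mathrm{Im}(\theta_\mu)$, or equivalently that the character identity $\mathrm{ch}(\Lambda^\mu)-\mathrm{ch}(\mathrm{Im}(\theta_\mu)) = s_{\mu'}$ holds. I would prove this by extending $\theta_\mu$ to the Akin--Buchsbaum--Weyman Koszul-type resolution of $L_\mu$, whose alternating character sum collapses, via the dual Jacobi--Trudi determinant $s_{\mu'}=\det(e_{\mu_i-i+j})$, to the desired character. Exactness in degree one (our statement) then follows from exactness of the full complex together with the already-established vanishing $\pi_\mu\circ \theta_\mu=0$. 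An alternative is a direct straightening algorithm on tableaux, expressing any non-standard basis element of $\Lambda^\mu$ modulo the two-row relations.

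The main obstacle is this reverse inclusion. The two-row surjectivity is clean, but in the length-$\ell$ situation different $c$-summands can contribute overlapping copies of the same irreducible, so one cannot just sum up two-row cokernels. Any proof must therefore either invoke a global character identity (ultimately Jacobi--Trudi) or carry out a combinatorial straightening argument; it is precisely this bookkeeping, rather than the easy "$\subseteq$" direction, that carries the content of the theorem.
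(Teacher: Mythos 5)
The paper offers no proof of this statement: Theorem~\ref{cok1} is quoted as ``well known'' with a pointer to Fulton, Chapter~8, Exercise~13. So there is no in-paper argument to compare against, and anything you write is necessarily an independent proof.

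Your outline has the right overall shape, but both halves leave substantive work undone. In the easy direction, the reduction is sound: since $L_\mu$ occurs in $\Lambda^\mu$ with multiplicity one and its highest-weight vector survives $1\otimes\cdots\otimes\pi_{(\mu_c,\mu_{c+1})}\otimes\cdots\otimes 1$, the kernel of that surjection (which, by flatness over a field, equals the image of the $c$-th summand) cannot meet the unique copy of $L_\mu$, so it lies in $\ker\pi_\mu$. However, the two-row input you rely on --- that $\mathrm{coker}(\theta_1)\simeq L_{(a,b)}$ --- does not follow from Pieri and $e_ae_b-e_{a+1}e_{b-1}=s_{(a,b)'}$ alone. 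You also need that $\theta_1$ acts by a nonzero scalar on each summand $L_{(a+j,b-j)}$ with $j\ge 1$ (equivalently, that $\theta_1$ is injective); otherwise the cokernel could pick up extra constituents and the character count is inconclusive. This nonvanishing is true and is essentially the content of the $t=1$ case of Lemma~\ref{lemmamain}, but your sketch does not supply it. (Also a small slip: the multiplicity of $L_\mu$ in $\Lambda^\mu$ is the Kostka number $K_{\mu\mu}=1$, not $K_{\mu',\mu}$, which need not equal $1$.)

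For the reverse inclusion you give no argument: you defer either to the Akin--Buchsbaum--Weyman resolution and dual Jacobi--Trudi, or to an unstated straightening algorithm. Citing the ABW complex together with its exactness is invoking a theorem at least as deep as the statement you are proving, so as a proof this route is circular in spirit; if that level of citation is acceptable, one may as well cite Fulton's exercise directly, as the paper does. The self-contained route is the straightening one --- reduce each basis element of $\Lambda^\mu$ modulo the two-column exchange relations to a combination of cosemistandard tableaux and then match dimensions against $L_\mu$ --- and that is precisely the bookkeeping you acknowledge but do not carry out.
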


It is straightforward to verify that in the diagrams
\begin{center}
	\begin{tikzcd}[row sep=small,column sep=small]
		& \Lambda^{a+t}\otimes \Lambda^{b-t} \arrow[dr,"\delta_s"] & \\
		\Lambda^{a}\otimes \Lambda^{b} \arrow[ur, "\delta_t"]\arrow[rr,"{\delta_{t+s}}"] & & \Lambda^{a+t+s}\otimes \Lambda^{b-t-s}
	\end{tikzcd}
	
	\begin{tikzcd}[row sep=small,column sep=small]
		& \Lambda^{a-t}\otimes \Lambda^{b+t} \arrow[dr,"\theta_s"] & \\
		\Lambda^{a}\otimes \Lambda^{b} \arrow[ur, "\theta_t"]\arrow[rr,"{\theta_{t+s}}"] & & \Lambda^{a-t-s}\otimes \Lambda^{b+t+s}
	\end{tikzcd}
\end{center}
we have \begin{equation}\label{comm}
	\delta_{s}\circ\delta_{t}=\tbinom{s+t}{t}\delta_{s+t} \;  \text{and} \; 	\theta_{s}\circ\theta_{t}=\tbinom{s+t}{t}\theta_{s+t}.	
\end{equation}	

We recall here an  important property of $L_{\mu}$. We will only need the special case corresponding to two-rowed partitions. Let us fix the order $e_ 1 < e_2 < ... < e_n$ on the set $\{e_1, e_2, ..., e_n\}$ of the canonical basis elements of $V$ and let us denote $e_i$ by its subscript $i$. For a partition $\mu=(\mu_1,\mu_2)$, a \textit{tableau} of shape $\mu$ is a filling of the diagram of $\mu$ with entries from $\{1,...,n\}$. We call such a tableau \textit{cosemistandard} if the entries are strictly increasing in the rows from left to right and weakly increasing in the columns from top to bottom. This is perhaps not standard terminology, but we will keep in line with the fact that $L_{\mu}$  has highest weight $\mu^\prime$ and not $\mu$. A classical result here is that there is a bijection between the cosemistandard tableaux that have  shape $\mu$ and entries from $\{1,...,n\}$, with a basis of the $K$-vector space $L_{\mu}$ given by 
\begin{center}
	\begin{ytableau}
		i_1 & i_2 & \dots & i_b &\dots &i_a\\
		j_1 & j_2 & \dots & j_b
	\end{ytableau}
	$\mapsto  \pi_\mu(e_{i_1} e_{i_2} \dots e_{i_a} \otimes e_{j_1} e_{j_2} \dots e_{j_b}).$
\end{center} We refer to the elements of this basis of $L_\mu$ as \textit{cosemistandard basis elements}.
\section{Presentations of Schur modules}
We consider here the question which homomorphisms of $G$-modules $\Phi : \Lambda^a \otimes \Lambda^b \to \Lambda^a \otimes \Lambda^b$, where $a \ge b$, have cokernel an irreducible $G$-module, i.e. a module of the form $L_{(a+i, b-i)}$ for some $i\ge 0$. We will give an answer via the action of $\Phi$ on each irreducible summand of $\Lambda^a \otimes \Lambda^b$ and to this end we need to consider the actions of the various $\Phi_t$ on $\Lambda^a \otimes \Lambda^b$, that were defined in Definition \ref{maps}. 

\subsection{Two lemmas} We will need the relations of $L_{(a,b)}$ described in the following lemma, which is an analog of \cite[Lemma 4.2]{MS3} for Schur modules.
\begin{lem}\label{exch} Let $\alpha = (a, b)$ be a partition, $a_i, b_i$ nonnegative integers, where $i=1,2,3$, such that $a=a_1+a_2+a_3$ and $b=b_1+b_2+b_3$ and let $x \in \Lambda^{a_1+b_1}$, $y \in \Lambda^{a_2+b_2}$ and $z \in \Lambda^{a_3+b_3}$. Let  $\sum_u x_u(a_1)\otimes x'_u(b_1) \in \Lambda^{a_1} \otimes \Lambda^{b_1}$ and $ \sum_v y_v(a_2)\otimes y'_v(b_2)\in \Lambda^{a_2} \otimes \Lambda^{b_2}$ and $ \sum_w z_w(a_3)\otimes z'_w(b_3)\in \Lambda^{a_3} \otimes \Lambda^{b_3}$ be the images of $x,y,z$ respectively under the comultiplication maps $\Lambda^{a_i+b_i} \to $ $\Lambda^{a_i} \otimes \Lambda^{b_i}$, $i=1,2,3$, respectively. Consider \[\omega= \sum_{u,v,w}x_u(a_1)y_v(a_2)z_w(a_3) \otimes x'_u(b_1)y'_v(b_2)z'_w(b_3)\in \Lambda^{a} \otimes \Lambda^{b}.\] If $b_1 \le a_2+a_3$, then in $L_\alpha$ we have \begin{align*}\pi_{\alpha}(\omega) =&(-1)^{\epsilon} \sum_{i_2,i_3}(-1)^{\epsilon'}\tbinom{b_2+i_2}{b_2} \tbinom{b_3+i_3}{b_3}\\&\pi_{\alpha}\big(\sum_{v,w} xy_{v}(a_2-i_2)z_w(a_3-i_3) \otimes y'_v(b_2+i_2) z'_w(b_3+i_3)\big),\end{align*} where $\epsilon = (a_2+a_3+1)b_1$, $\epsilon'=i_2(a_3-i_3)+i_3b_2$ and the first sum is over all nonnegative integers $i_2, i_3$ such that $i_2+i_3=b_1$, $i_2 \le a_2$ and $i_3 \le a_3$.
	
\end{lem}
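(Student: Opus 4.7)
The plan is to prove the identity by induction on $b_1$, leveraging the fundamental fact that $\pi_\alpha\circ\theta_k=0$ on $\Lambda^{a+k}\otimes\Lambda^{b-k}$ for every $k\ge 1$. This holds by Theorem~\ref{cok1} (for $k=1$) together with the composition formula $\theta_1^{k}=k!\,\theta_k$ derived from (\ref{comm}). The base case $b_1=0$ is immediate: only the index $i_2=i_3=0$ survives, the signs and binomials are trivial, and both sides equal $\pi_\alpha(\omega)$.

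For the inductive step, I would apply $\pi_\alpha\circ\theta_{b_1}$ to the auxiliary element
\[\omega^{\ast}=\sum_{v,w}x\cdot y_v(a_2)\,z_w(a_3)\;\otimes\; y'_v(b_2)\,z'_w(b_3)\;\in\;\Lambda^{a+b_1}\otimes\Lambda^{b-b_1},\]
which vanishes in $L_\alpha$. Expanding $\theta_{b_1}(\omega^{\ast})=(1\otimes m)(\Delta_{a,b_1}\otimes 1)(\omega^{\ast})$ by applying (\ref{comul}) twice to $\Delta_{a,b_1}\bigl(x\cdot y_v(a_2)\cdot z_w(a_3)\bigr)$ produces a sum indexed by nonnegative triples $(t_1,i_2,i_3)$ with $t_1+i_2+i_3=b_1$, $i_2\le a_2$, $i_3\le a_3$; the hypothesis $b_1\le a_2+a_3$ ensures the case $t_1=0$ is present. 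I would split the summands into three classes: the extremal term with $(t_1,i_2,i_3)=(b_1,0,0)$, which is $\omega$ itself up to a sign; the terms with $t_1=0$ (so $i_2+i_3=b_1$), where $x$ remains whole in the first tensor factor and, after coassociativity of $\Delta$ together with the identity $m\circ\Delta_{k,\ell}(w)=\binom{k+\ell}{k}w$ collapses the inner comultiplications of $y$ and $z$, one reads off the binomials $\binom{b_2+i_2}{b_2}$, $\binom{b_3+i_3}{b_3}$ and, after Koszul-sign bookkeeping, the sign $(-1)^{\epsilon'}$ of the statement; and the ``mixed'' terms with $0<t_1<b_1$, in which $x$ is split across both tensor factors. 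Each mixed term is an instance of $\omega$ from the lemma statement with $b_1$ replaced by $t_1<b_1$, and the hypothesis $b_1\le a_2+a_3$ passes to the analogous inequality for the smaller instance, so the inductive hypothesis applies.

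Combining the three contributions via $\pi_\alpha(\theta_{b_1}(\omega^{\ast}))=0$ and solving for $\pi_\alpha(\omega)$ yields the claimed identity, the global sign $(-1)^\epsilon$ and the precise binomial-sum structure emerging from the interaction of the $t_1=0$ terms with the $0<t_1<b_1$ terms after the latter have been reduced via the inductive hypothesis. The principal obstacle is this combinatorial bookkeeping: the total contribution of classes (ii) and (iii) must reassemble into exactly the right-hand side of the statement, which amounts to a Vandermonde-type convolution identity on the coefficients $(-1)^{\epsilon'}\binom{b_2+i_2}{b_2}\binom{b_3+i_3}{b_3}$. A conceptually cleaner alternative is to prove the case $b_1=1$ directly — no mixed terms arise then and the identity falls out of $\pi_\alpha\circ\theta_1=0$ at once — and then iterate, transferring the elements of $x$ from the second factor to the first one at a time; the binomial coefficients in the statement then emerge by a telescoping of the elementary moves, mirroring the strategy used for the analogous statement for Weyl modules in \cite[Lemma~4.2]{MS3}.
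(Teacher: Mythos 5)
Your proposal identifies the right engine for the induction — $\pi_\alpha\circ\theta_k=0$ — and both routes you sketch are plausible, but they diverge in a way worth spelling out. The paper's actual proof is essentially your \emph{alternative}, not your primary route: it inducts on $b_1$, forms the auxiliary element $\omega'=\sum_{u,v,w}x_u(a_1+1)y_v(a_2)z_w(a_3)\otimes x'_u(b_1-1)y'_v(b_2)z'_w(b_3)\in\Lambda^{a+1}\otimes\Lambda^{b-1}$ (one $x$-factor shifted across, not all of $x$ as in your $\omega^*$), and applies $\pi_\alpha\circ\theta_1(\omega')=0$. Expanding $\theta_1$ yields exactly three contributions — $\omega$ itself with coefficient $\pm b_1$, and two terms $A$, $B$ that are instances of the lemma with $b_1-1$ in place of $b_1$ — so the inductive hypothesis applies twice, and the coefficients combine through the single elementary identity $(b_2+1)\tbinom{b_2+i_2}{b_2+1}\tbinom{b_3+i_3}{b_3}+(b_3+1)\tbinom{b_2+i_2}{b_2}\tbinom{b_3+i_3}{b_3+1}=b_1\tbinom{b_2+i_2}{b_2}\tbinom{b_3+i_3}{b_3}$ (an incidence of Pascal's rule using $i_2+i_3=b_1$). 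In contrast, your primary route applies $\theta_{b_1}$ to the ``fully shifted'' $\omega^*$ all at once, which produces a triple sum over $(t_1,t_2,t_3)$ and asks you to verify that the $t_1=0$ terms and the inductively reduced $0<t_1<b_1$ terms reassemble into the claimed right-hand side; as you note yourself, this reduces to a Vandermonde-type convolution with Koszul signs that you do not carry out, and it is precisely that bookkeeping that the paper avoids by choosing $\theta_1$ rather than $\theta_{b_1}$. So: the one-element-at-a-time alternative you mention at the end is the paper's proof (and your instinct to mirror \cite[Lemma~4.2]{MS3} is right); the $\theta_{b_1}$-all-at-once version is a legitimate but unnecessarily heavier strategy with an honest gap where the convolution identity should be checked.
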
 \begin{proof}We argue by induction on $b_1$, the case $b_1=0$ being clear. Suppose $b_1 >0$. Let \[\omega'= \sum_{u,v,w}x_u(a_1+1)y_v(a_2)z_w(a_3) \otimes x'_u(b_1-1)y'_v(b_2)z'_w(b_3)\in \Lambda^{a+1} \otimes \Lambda^{b-1}.\] By Theorem \ref{cok1} we have $\pi_{\alpha} \circ \theta_1(\omega')=0$. Computing $\theta_1(\omega')$ using (\ref{comul}) we obtain \begin{equation*}\label{eq101} 0  = (-1)^ {a_2+a_3}b_1\pi_\alpha(\omega) + (-1)^{a_3+b_1-1}(b_2+1)\pi_\alpha(A) + (-1)^{b_1+b_2-1}(b_3+1)\pi_\alpha(B),\end{equation*} 
	where \begin{align*} &A=\sum_{u,v,w}x_u(a_1+1)y_v(a_2-1)z_w(a_3) \otimes x'_u(b_1-1)y'_v(b_2+1)z'_w(b_3),\\&
		B=\sum_{u,v,w}x_u(a_1+1)y_v(a_2)z_w(a_3-1) \otimes x'_u(b_1-1)y'_v(b_2)z'_w(b_3+1).	
	\end{align*}
	Hence $\pi_\alpha(\omega)$ is equal to \begin{equation}\label{eq102}  (-1)^ {a_2+a_3+1}\frac{1}{b_1}\big((-1)^{a_3+b_1-1}(b_2+1)\pi_\alpha(A) + (-1)^{b_1+b_2-1}(b_3+1)\pi_\alpha(B)\big),\end{equation} 
	By induction we have
	\begin{align*} \pi_{\alpha}(A) =&(-1)^{\zeta} \sum_{\substack{j_2+j_3=b_1-1\\ j_2 \le a_2-1, \; j_3 \le a_3}}(-1)^{\zeta_1}\tbinom{b_2+1+j_2}{b_2+1} \tbinom{b_3+j_3}{b_3}\\&\pi_{\alpha}\big(\sum_{v,w} xy_{v}(a_2-(1+j_2))z_w(a_3-j_3) \otimes y'_v(b_2+(1+j_2)) z'_w(b_3+j_3)\big),	
	\end{align*}
	\begin{align*} \pi_{\alpha}(B) =&(-1)^{\zeta} \sum_{\substack{j_2+j_3=b_1-1\\ j_2 \le a_2, \; j_3 \le a_3-1}}(-1)^{\zeta_2}\tbinom{b_2+i_2}{b_2} \tbinom{b_3+1+j_3}{b_3+1}\\&\pi_{\alpha}\big(\sum_{v,w} xy_{v}(a_2-j_2)z_w(a_3-(1+j_3)) \otimes y'_v(b_2+j_2) z'_w(b_3+(1+j_3))\big),	
	\end{align*}where $\zeta=(a_2+a_3)(b_1-1)$, $\zeta_1 = j_2(a_3-j_3)+j_3(b_2+1)$ and $\zeta_2 = j_2(a_3-1-j_3)+j_3b_2$.
	Substituting the above expressions in (\ref{eq101}) and computing the signs, we see  that the coefficient of 
	\[\pi_{\alpha}\big(\sum_{v,w} xy_{v}(a_2-i_2)z_w(a_3-i_3) \otimes y'_v(b_2+i_2)) z'_w(b_3+i_3)\big),\]
	where $i_2+i_3=b_1$, is equal to
	\[(-1)^{\epsilon +\epsilon'}\frac{1}{b_1}\big( (b_2+1)\tbinom{b_2+i_2}{b_2+1}\tbinom{b_3+i_3}{b_3} + (b_3+1)\tbinom{b_2+i_2}{b_2}\tbinom{b_3+i_3}{b_3+1}\big).\] 
	But this is equal to 
	$(-1)^{\epsilon +\epsilon'}\tbinom{b_2+i_2}{b_2}\tbinom{b_3+i_3}{b_3}$	
	and the desired result follows.		
\end{proof}

Each map $\Phi \in \Hom_G(\Lambda^a \otimes \Lambda^b, \Lambda^a \otimes \Lambda^b)$ acts as a scalar on every irreducible summand of $\Lambda^a \otimes \Lambda^b$. 	These summands are given by Pieri's rule \cite[2.2]{F}, $\Lambda^a \otimes \Lambda^b = \bigoplus_{i=0}^bL_{(a+i, b-i)}$. We compute the scalars in the next lemma when $\Phi=\Phi_t$. This is important for us as the $\Phi_t$, where $t\in \{0,\dots, b\}$, form a basis of the vector space $\Hom_G(\Lambda^a \otimes \Lambda^b, \Lambda^a \otimes \Lambda^b)$ (see Corollary \ref{basis} below).

\begin{lem}\label{lemmamain} Let $a,b,i,t \in \mathbb{N}$ such that $a \ge b \ge i$ and let $\mu=(a+i,b-i)$.
	
	\emph{(1)} Let $i < t$. Then the restriction of $\Phi_t$ to the summand $L_{\mu}$ of $\Lambda ^a \otimes \Lambda^b$ is equal to zero.
	
	\emph{(2)} Let $i \ge t$. Define $c_t(a,b,i)=\tbinom{i}{t}\tbinom{a-b+t+i}{t}$. Then the following diagram is commutative
	\begin{center}
		\begin{tikzcd}
			\Lambda^a \otimes \Lambda^b \arrow[r, "\Phi_t"] \arrow[d, "\delta_{i}"]  
			& \Lambda^{a} \otimes \Lambda^{b} \arrow[d, "\delta_{i}"]   \\
			\Lambda^{a+i} \otimes \Lambda^{b-i} \arrow[d, "\pi_{\mu}"]
			&  \Lambda^{a+i} \otimes \Lambda^{b-i} \arrow[d, "\pi_{\mu}"] \\
			L_{\mu} \arrow[r, "c_{t}{(a,b,i)}"'] & L_{\mu}
		\end{tikzcd}
	\end{center} where the bottom horizontal map is multiplication by $c_t(a,b,i)$. Moreover, if $n \ge a+b$, then the map $\pi_{\mu} \circ \delta_i : \Lambda ^{a} \otimes \Lambda ^b \to L_{\mu}$ is nonzero.
\end{lem}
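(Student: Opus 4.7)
My plan is as follows. For (1), I would invoke Pieri's rule: the target $\Lambda^{a+t}\otimes\Lambda^{b-t}$ of $\delta_t$ decomposes as $\bigoplus_{j=0}^{b-t}L_{(a+t+j,\,b-t-j)}$, which omits $L_\mu=L_{(a+i,\,b-i)}$ whenever $i<t$. Being $G$-equivariant, $\delta_t$ must annihilate $L_\mu\subseteq\Lambda^a\otimes\Lambda^b$, and hence so does $\Phi_t=\theta_t\circ\delta_t$.

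For (2), the commutativity of the diagram amounts to the identity $\pi_\mu\circ\delta_i\circ\Phi_t=c_t(a,b,i)\,\pi_\mu\circ\delta_i$ of $G$-morphisms $\Lambda^a\otimes\Lambda^b\to L_\mu$. Since $L_\mu$ occurs with multiplicity one in $\Lambda^a\otimes\Lambda^b$ by Pieri's rule, Schur's lemma gives $\dim_K\Hom_G(\Lambda^a\otimes\Lambda^b,L_\mu)=1$, so any two such morphisms are proportional. Thus, once $\pi_\mu\circ\delta_i$ is shown nonzero, the desired identity reduces to verifying it on a single test vector.

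To establish the nonzeroness (and simultaneously set up the scalar computation), I would assume $n\ge a+b$ and take $z=e_1\cdots e_a\otimes e_{a+1}\cdots e_{a+b}$. A direct application of $\Delta_{i,b-i}$ gives $\delta_i(z)=\sum_{S}\epsilon(S)\,e_{\{1,\ldots,a\}\cup S}\otimes e_{\{a+1,\ldots,a+b\}\setminus S}$, summed over $S\subseteq\{a+1,\ldots,a+b\}$ of size $i$, with shuffle signs $\epsilon(S)$. Each summand, read as a tableau of shape $\mu$, is cosemistandard because the columns satisfy $c_j=j\le a<a+1\le d_j$ for $j\le b-i$; consequently $\pi_\mu(\delta_i(z))$ is a nonzero linear combination of distinct cosemistandard basis elements of $L_\mu$.

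To pin down the scalar, I would expand $\Phi_t(z)=\theta_t(\delta_t(z))$ using the definitions and the product comultiplication formula~\eqref{comul}, then apply $\delta_i$ and extract the coefficient of the distinguished cosemistandard element $\pi_\mu(e_{\{1,\ldots,a+i\}}\otimes e_{\{a+i+1,\ldots,a+b\}})$ in the projection to $L_\mu$, straightening any non-cosemistandard tableaux that appear by means of Lemma~\ref{exch}. Regrouping the resulting triple sum by an overlap parameter $v$, and using $\binom{i}{k}\binom{i-k}{t-k}=\binom{i}{t}\binom{t}{k}$ together with Chu--Vandermonde, should collapse the coefficient to $\binom{i}{t}\binom{a-b+i+t}{t}=c_t(a,b,i)$, completing the proof. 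The hard part will be consistent sign bookkeeping through the nested comultiplications and the straightening step.
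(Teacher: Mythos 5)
Your argument for part (1) and for the nonzeroness of $\pi_\mu\circ\delta_i$ coincides with the paper's. Where you diverge is in part (2): you invoke Pieri plus Schur's lemma to note that $L_\mu$ has multiplicity one in $\Lambda^a\otimes\Lambda^b$, so the two $G$-morphisms $\pi_\mu\circ\delta_i\circ\Phi_t$ and $\pi_\mu\circ\delta_i$ into $L_\mu$ are automatically proportional, and the commutativity of the diagram reduces to evaluating the proportionality constant on a single test vector. This is a legitimate conceptual shortcut that the paper does not use. The paper instead factors $\Phi_t=\theta_t\circ\delta_t$ and splits the diagram into two rectangles, proving the left one commutes with scalar $\binom{i}{t}$ by the elementary composition law $\delta_{i-t}\circ\delta_t=\binom{i}{t}\delta_i$ from~\eqref{comm}, and then computing the right rectangle explicitly using~\eqref{comul}, Lemma~\ref{exch}, and the Vandermonde-type identity $\sum_{i_1=0}^{t}(-1)^{t-i_1}\binom{a+i_1}{i_1}\binom{b-i}{t-i_1}=\binom{a-b+t+i}{t}$, yielding the scalar $\binom{a-b+t+i}{t}$. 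This two-step factorization isolates the binomial combinatorics to a single simpler composition $\pi_\mu\circ\delta_i\circ\theta_t$, and does not rely on $\pi_\mu\circ\delta_i$ being nonzero, so it covers all $n$ uniformly.

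The genuine weakness of your proposal is that the Schur's lemma reduction does not remove the computational burden; it merely relocates it to the coefficient extraction on the test vector $z=e_1\cdots e_a\otimes e_{a+1}\cdots e_{a+b}$, and that computation is left as a sketch with acknowledged uncertainty about the signs. Tracing $\theta_t\circ\delta_t$ followed by $\delta_i$ and then straightening via Lemma~\ref{exch} to extract the coefficient of one fixed cosemistandard element involves a triple sum with nontrivial sign bookkeeping through nested comultiplications — arguably more, not less, combinatorial work than the paper's factored calculation. Until that computation is carried out and the scalar is verified to be $\binom{i}{t}\binom{a-b+t+i}{t}$, your argument establishes only that the diagram commutes for \emph{some} scalar depending on $(a,b,i,t)$, not the asserted one. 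You should also note that your reduction as stated pins the scalar down only when $\pi_\mu\circ\delta_i\ne0$, which you only prove for $n\ge a+b$; the lemma's commutativity claim is unconditional on $n$, so a word is needed (either that the statement is vacuous when $L_\mu=0$ or $\pi_\mu\circ\delta_i=0$, or a naturality-in-$n$ argument).
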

\begin{proof} (1) By definition, $\Phi_t$ factors through $\Lambda^{a+t}\otimes\Lambda^{b-t}$. But the irreducible module $L_{\mu}$ is not a summand of 	$\Lambda^{a+t}\otimes\Lambda^{b-t}$	if $i<t$, according to Pieri's rule.
	
	(2) For the first statement, it suffices to show that in the following diagram the left and right rectangles commute
	\begin{center}
		\begin{tikzcd}
			\Lambda^a \otimes \Lambda^b \arrow[r, "\delta_t"] \arrow[d, "\delta_{i}"]  
			& \Lambda^{a+t} \otimes \Lambda^{b-t} \arrow[d, "\delta_{i-t}"] \arrow[r, "\theta_{t}"]& \Lambda^{a} \otimes \Lambda^{b} \arrow[d, "\delta_{i}"]   \\
			\Lambda^{a+i} \otimes \Lambda^{b-i} \arrow[d, "\pi_{\mu}"]
			& \Lambda^{a+i} \otimes \Lambda^{b-i} \arrow[d, "\pi_{\mu}"]&  \Lambda^{a+i} \otimes \Lambda^{b-i} \arrow[d, "\pi_{\mu}"] \\
			L_{\mu} \arrow[r, "\tbinom{i}{t}"'] &L_{\mu} \arrow[r, "\tbinom{a-b+t+i}{t}"']& L_{\mu}
		\end{tikzcd}
	\end{center}
	The left rectangle commutes by the first equality of	(\ref{comm}).
	
	Let $x\otimes y \in \Lambda^{a+t} \otimes \Lambda^{b-t}$. In the clockwise direction we have \begin{align*}&\pi_{\mu} \circ \delta_i \circ \theta_t(x\otimes y) =\pi_{\mu} \circ \delta_i\big(\sum_u x_u(a)\otimes x'_u(t)y\big) \\&= \pi_{\mu}\big( \sum_{i_1, i_2}(-1)^{(t-i_1)i_2}\tbinom{a+i_1}{i_1}\sum_{u,v}x_u(a+i_1)y_v(i_2)\otimes x'_u(t-i_1)y'_v(b-t-i_2)\big),\end{align*} where we applied (\ref{comul}) in the second equality. The second sum is over all $i_1,i_2 \in \mathbb{N}$ such that $i_1+i_2=i$ and $i_1 \le t$. We apply Lemma \ref{exch} to the third sum to obtain $\pi_{\mu} \circ \delta_i \circ \theta_t(x\otimes y)$ is equal to
	\begin{align*}& \pi_{\mu}\big( \sum_{i_1, i_2}(-1)^{(t-i_1)i_2}\tbinom{a+i_1}{i_1}(-1)^{(i_2+1)(t-i_1)}\tbinom{b-i}{t-i_1}\sum_{v}xy_v(i-t)\otimes y'_v(b-i)\big) \\&=\Big(\sum_{i_1=0}^{t}(-1)^{t-i_1}\tbinom{a+i_1}{i_1}\tbinom{b-i}{t-i_1}\Big)\pi_{\mu}(\sum_{v}xy_v(i-t)\otimes y'_v(b-i)).\end{align*} By the first identity of \cite[Lemma 4.1(2)]{MS3}, we have  $\sum_{i_1=0}^{t}(-1)^{t-i_1}\tbinom{a+i_1}{i_1}\tbinom{b-i}{t-i_1}=$ $ \tbinom{a-b+t+i}{t}$. Hence $\pi_{\mu} \circ \delta_i \circ \theta_t(x\otimes y)$ is equal to \begin{equation*}\tbinom{a-b+t+i}{t}\pi_{\mu}(\sum_{v}xy_v(i-t)\otimes y'_v(b-i))=\tbinom{a-b+t+i}{t}\pi_{\mu} \circ \delta_{i-t}(x\otimes y) \end{equation*} and the right rectangle commutes.
	
	Since $n \ge a+b$, we have the element $x=e_1\cdots e_a\otimes e_{a+1} \cdots e_{a+b} \in \Lambda^a \otimes \Lambda^b$. From the definition of $\delta_{i}$ it follows that  $\pi_{\mu} \circ \delta_{i}(x)$ is a sum of elements of the form $
	\pm\pi_{\mu}(e_1 \cdots e_a u_1 \cdots u_{i} \otimes u_{i+1} \cdots u_b),$
	where $u_1,\dots,u_b \in \{e_{a+1},\dots,e_b\}$ and $\{u_1,\dots,u_{i}\} \cap$ $\{u_{i+1},\dots,u_{b}\} = \emptyset $.	The number of the $u_{i+1},\dots,u_{b}$ is equal to $b-i$ and we have $b-i \le a. $ Thus there are no violations in the columns, meaning that the above elements  are (up to sign) cosemistandard basis elements of $L_{\mu}$. They are clearly distinct. Hence we conclude that $\pi_\mu \circ \delta_{i}(x)$ is nonzero and the map $\pi_\mu \circ \delta_{i}$ is nonzero.	\end{proof} 

%We note that if in the above lemma $i<t$, then $\pi_{\mu} \circ \delta_i %\circ \phi_t = 0$. Indeed, this follows from the fact that $L_{\mu}$ is not %a factor of $\Lambda^{a+t} \otimes \Lambda^{b-t}$ according to Pieri's rule, %and hence is not a factor of the image of $\Phi_t$. If we adopt the %convention that $\tbinom{i}{t}=0$ when $i<t$, then the diagram of the lemma %commutes for all $i,t$.
\begin{cor}\label{basis}
	Let $a,b \in \mathbb{N}$ such that $a \ge b$. If $n \ge a+b$, then a basis of the vector space $\Hom_{G}(\Lambda^a \otimes \Lambda^b, \Lambda^a \otimes \Lambda^b )$ is $\{\Phi_0,\Phi_1,\dots,\Phi_b\}$.
\end{cor}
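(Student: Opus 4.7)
The plan is to combine a dimension count coming from Schur's lemma with the explicit eigenvalue computation of Lemma \ref{lemmamain}.

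First, by Pieri's rule (recalled just before Lemma \ref{lemmamain}) we have the decomposition
\[
\Lambda^a \otimes \Lambda^b \;=\; \bigoplus_{i=0}^{b} L_{(a+i,\, b-i)}
\]
into pairwise non-isomorphic irreducibles (the hypothesis $n \ge a+b \ge a+b-i$ guarantees that every summand is nonzero). By Schur's lemma, the endomorphism algebra of this module is commutative and
\[
\dim_K \Hom_G(\Lambda^a \otimes \Lambda^b,\ \Lambda^a \otimes \Lambda^b) \;=\; b+1,
\]
which is exactly the number of maps in our candidate basis. Therefore it suffices to prove that $\Phi_0, \Phi_1, \dots, \Phi_b$ are linearly independent in this endomorphism space.

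To establish linear independence, I will compute the scalar by which each $\Phi_t$ acts on the summand $L_{(a+i,b-i)}$ for $0 \le i \le b$. By Lemma \ref{lemmamain}(1), $\Phi_t$ vanishes on $L_{(a+i,b-i)}$ whenever $i < t$. For $i \ge t$, Lemma \ref{lemmamain}(2) gives the commutative diagram expressing $\pi_\mu \circ \delta_i \circ \Phi_t = c_t(a,b,i) \cdot \pi_\mu \circ \delta_i$, and moreover (using $n \ge a+b$) the map $\pi_\mu \circ \delta_i$ is nonzero. Being a nonzero $G$-equivariant map onto the irreducible $L_\mu$, its restriction to the isotypic summand $L_{(a+i,b-i)} \subseteq \Lambda^a \otimes \Lambda^b$ is an isomorphism; consequently $\Phi_t$ acts on that summand by the scalar $c_t(a,b,i) = \binom{i}{t}\binom{a-b+t+i}{t}$.

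Now suppose $\sum_{t=0}^{b} \alpha_t \Phi_t = 0$. Restricting to $L_{(a+i,b-i)}$ for each $i = 0, 1, \dots, b$ yields the system
\[
\sum_{t=0}^{i} \alpha_t \binom{i}{t}\binom{a-b+t+i}{t} \;=\; 0, \qquad i = 0, 1, \dots, b.
\]
This is a lower-triangular linear system in the unknowns $\alpha_0, \dots, \alpha_b$ whose diagonal entries are $c_i(a,b,i) = \binom{a-b+2i}{i}$, each a positive integer since $a \ge b$. Forward substitution forces $\alpha_0 = \alpha_1 = \cdots = \alpha_b = 0$, completing the proof. The only place where real work is needed is the eigenvalue identification, and that has already been carried out in Lemma \ref{lemmamain}; the present argument is essentially bookkeeping.
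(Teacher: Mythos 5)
Your proof is correct and follows essentially the same route as the paper: both use Pieri's rule together with Schur's lemma to obtain the dimension count $b+1$, then invoke Lemma \ref{lemmamain} to see that the matrix of scalars by which the $\Phi_t$ act on the summands $L_{(a+i,b-i)}$ is triangular with nonzero diagonal, giving linear independence. The paper phrases this as $\Ima(\Phi_t) = \bigoplus_{i \ge t} L_{(a+i,b-i)}$ rather than writing out the linear system, but the underlying argument is the same.
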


\begin{proof} 	By Pieri's rule \cite[2.2]{F}, $\Lambda^a \otimes \Lambda^b = \bigoplus_{i=0}^bL_{(a+i, b-i)}$. Since the $G$-modules $L_{(a+i, b-i)}$ are irreducible and distinct, we have $\dim\Hom_{G}(\Lambda^a \otimes \Lambda^b, \Lambda^a \otimes \Lambda^b )=b+1.$ By Lemma \ref{lemmamain} it follows that $\Ima(\Phi_t)=$ $\bigoplus_{i \ge t}L_{(a+i,b-i)},$ $ t=0,\dots,b.$ Hence the $\Phi_0,\dots,\Phi_b$ are linearly independent. 
\end{proof}
\begin{rem}\label{subset}
	For later use we record that $\Ima(\Phi_i) \subseteq \Ima(\Phi_1)\subseteq \Ima(\theta_1)$ if $i>0$.
\end{rem}

%	Even though not needed in the sequel, we observe the following.
%	\begin{cor}
	%	Let $a,b,t \in \mathbb{N}$ such that $\min\{a,b\} \ge t$. If $n \ge a+b$, then %$\coker(\Phi_t) \simeq L_{aaa}$, where is the skew partition
	%	\end{cor}
\subsection{Main result}	Our main result for the case of two rows is the following, which provides an arithmetic criterion when the cokernel of $\Phi \in \Hom_{G}(\Lambda^a \otimes \Lambda^b, \Lambda^a \otimes \Lambda^b )$ is irreducible. Recall the notation $c_t(a,b,i)=\tbinom{i}{t}\tbinom{a-b+t+i}{t}$ from Lemma \ref{lemmamain}.\begin{thm}\label{main}
	Let $a,b \in \mathbb{N}$ such that $a \ge b$ and let $n \ge a+b$. Let $\Phi \in \Hom_{G}(\Lambda^a \otimes \Lambda^b, \Lambda^a \otimes \Lambda^b )$ and write $\Phi=\sum_{t=0}^{b}a_t\Phi_t$, where $a_t \in K$. Then the cokernel of $\Phi$ is isomorphic to $L_{(a+i,b-i)}$ if and only if \begin{enumerate}
		\item $\sum_{t=0}^{i}a_tc_t(a,b,i)=0$, and 
		\item $\sum_{t=0}^{j}a_tc_t(a,b,j) \neq0$ for all $j \in \{0,\dots,b\} \setminus \{i\} $.
	\end{enumerate}
\end{thm}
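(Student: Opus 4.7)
The plan is to exploit Schur's lemma together with the Pieri decomposition $\Lambda^a \otimes \Lambda^b = \bigoplus_{j=0}^{b} L_{(a+j,\,b-j)}$, which is multiplicity-free into pairwise non-isomorphic irreducibles. Every element of $\Hom_G(\Lambda^a \otimes \Lambda^b,\, \Lambda^a \otimes \Lambda^b)$ therefore acts as a scalar on each summand, and it suffices to compute the scalar $s_t(j)$ by which each basis element $\Phi_t$ acts on $L_{(a+j,\,b-j)}$. Lemma \ref{lemmamain}(1) immediately gives $s_t(j) = 0$ whenever $j < t$.

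For $j \ge t$, I would apply Lemma \ref{lemmamain}(2) with its ``$i$'' replaced by $j$ and $\mu = (a+j, b-j)$. The hypothesis $n \ge a+b$ ensures that $\pi_\mu \circ \delta_j : \Lambda^a \otimes \Lambda^b \to L_{(a+j,\,b-j)}$ is nonzero, hence by Schur's lemma it must vanish on every summand $L_{(a+j',\,b-j')}$ with $j' \ne j$ and restrict to a nonzero scalar multiple of the identification on the summand $L_{(a+j,\,b-j)}$. In other words, up to a nonzero constant, $\pi_\mu \circ \delta_j$ is the canonical projection onto that summand. The commutativity of the diagram in Lemma \ref{lemmamain}(2) then forces $s_t(j) = c_t(a,b,j)$.

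Putting these together, $\Phi = \sum_{t=0}^{b} a_t \Phi_t$ acts on $L_{(a+j,\,b-j)}$ as the scalar
\[
\sigma_j \;:=\; \sum_{t=0}^{j} a_t\, c_t(a,b,j),
\]
with the terms $t > j$ contributing zero by part (1). Because each summand is irreducible and $K$ has characteristic zero, the restriction of $\Phi$ to $L_{(a+j,\,b-j)}$ is either the zero map (when $\sigma_j = 0$) or an automorphism (when $\sigma_j \ne 0$). Consequently $\coker(\Phi) \cong \bigoplus_{j:\,\sigma_j = 0} L_{(a+j,\,b-j)}$, and the isomorphism $\coker(\Phi) \cong L_{(a+i,\,b-i)}$ is equivalent to $\{\, j : \sigma_j = 0 \,\} = \{i\}$, which is precisely the conjunction of conditions (1) and (2). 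No real obstacle arises; the substantive work has already been done in Lemma \ref{lemmamain}, and the only point requiring care is the clean extraction of the eigenvalues $c_t(a,b,j)$ via the Schur-lemma interpretation of $\pi_\mu \circ \delta_j$ as a projection (up to scalar) onto the corresponding summand.
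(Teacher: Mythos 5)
Your proof is correct and follows essentially the same route as the paper: decompose $\Lambda^a \otimes \Lambda^b$ by Pieri's rule into a multiplicity-free sum, use Lemma \ref{lemmamain} to identify the scalar $\sum_{t=0}^{j} a_t c_t(a,b,j)$ by which $\Phi$ acts on each summand $L_{(a+j,b-j)}$, and read off the cokernel. You spell out the Schur-lemma bookkeeping (that $\pi_\mu\circ\delta_j$ must be, up to a nonzero scalar, the projection onto the $j$-th summand) that the paper leaves implicit in the statement of Lemma \ref{lemmamain}, but the substance is identical.
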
 \begin{proof}
	We have $\Lambda^a \otimes \Lambda^b = \bigoplus_{j=0}^bL_{(a+j, b-j)}$. By Lemma \ref{lemmamain}, the map $\Phi$ acts on $L_{(a+j, b-j)}$ as multiplication by $\sum_{t=0}^{j}a_tc_t(a,b,j)$. The result follows.
\end{proof}
The previous result for $i=0$ may be generalized easily to the case of partitions $\mu =(\mu_1, \dots, \mu_{\ell} )$ consisting of ${\ell} \ge 2$  parts. We need some notation. For $c \in \{1, \dots , {\ell}-1\}$, let $\mu(c)$ be the partition $(\mu_c, \mu_{c+1})$. If $\Psi_{\mu(c)} \in \Hom_G(\Lambda^{\mu(c)},\Lambda^{\mu(c)})$, let $\overline{\Psi}_{\mu(c)} \in \Hom_G(\Lambda^{\mu},\Lambda^{\mu})$ be defined by $\overline{\Psi}_{\mu(c)}=1 \otimes \cdots \otimes 1 \otimes \Psi_{\mu(c)} \otimes 1 \otimes \cdots \otimes 1.$ Let $\{\Phi_{\mu(c),0}, \dots, \Phi_{\mu(c),\mu_{c+1}} \}$ be the basis of $\Hom_G(\Lambda^{\mu(c)},\Lambda^{\mu(c)})$ given in Corollary \ref{basis}. There are uniquely determined $a_{\mu(c), t} \in K$ such that $\Psi_{\mu(c)} = \sum_{t=0}^{\mu_{c+1}}a_{\mu(c), t} \Phi_{\mu(c), t}$.  Our main result for many rows is the following. We use the notation $|\mu| = \mu_1 + \dots +\mu_\ell$.
\begin{cor}\label{maincor}With the previous notation, suppose $n \ge |\mu|$. Then the cokernel of $ \sum_{c=1}^{{\ell}-1}\overline{\Psi}_{\mu(c)} :\bigoplus_{c=1}^{{\ell}-1} \Lambda^{\mu} \to \Lambda^{\mu}$ is isomorphic to $L_{\mu}$ if  \begin{center}
		$a_{\mu(c),0} =0$ and $\sum_{t=1}^{j}a_{\mu (c),t}c_t(\mu_c,\mu_{c+1},j) \neq0$
	\end{center} for all $c=1,\dots,{\ell}-1$ and $j=1,\dots,\mu_{c+1}$.\end{cor}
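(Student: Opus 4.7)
The plan is to reduce to the two-row Theorem \ref{main} applied on each consecutive pair of factors, and then invoke Theorem \ref{cok1}. The arithmetic hypotheses of the corollary are designed to match Theorem \ref{main} at $i=0$ for the two-rowed partition $\mu(c)=(\mu_c,\mu_{c+1})$: since $c_0(\mu_c,\mu_{c+1},0)=1$, condition (1) of Theorem \ref{main} at $i=0$ reads $a_{\mu(c),0}=0$, and after imposing this, condition (2) for $j=1,\dots,\mu_{c+1}$ becomes $\sum_{t=1}^{j}a_{\mu(c),t}c_t(\mu_c,\mu_{c+1},j)\neq 0$, which is precisely what is assumed. The hypothesis $n \geq |\mu|$ in particular gives $n \geq \mu_c + \mu_{c+1}$, which is what Theorem \ref{main} requires for each $c$.

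First, apply Theorem \ref{main} with $i=0$ to each $\Psi_{\mu(c)}\in\Hom_G(\Lambda^{\mu(c)},\Lambda^{\mu(c)})$ to conclude that $\coker(\Psi_{\mu(c)})\cong L_{\mu(c)}$. In parallel, Theorem \ref{cok1} applied to the two-rowed partition $\mu(c)$ yields $\coker(\theta_1\colon\Lambda^{\mu_c+1}\otimes\Lambda^{\mu_{c+1}-1}\to\Lambda^{\mu(c)})\cong L_{\mu(c)}$. By Pieri's rule, $\Lambda^{\mu(c)}=\bigoplus_{i=0}^{\mu_{c+1}}L_{(\mu_c+i,\mu_{c+1}-i)}$ is multiplicity-free, so every $G$-submodule of $\Lambda^{\mu(c)}$ is a direct sum of distinct isotypic components and is therefore uniquely determined by its cokernel. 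It follows that $\Ima(\Psi_{\mu(c)})=\Ima(\theta_1)$ inside $\Lambda^{\mu(c)}$, both being equal to $\bigoplus_{i\geq 1}L_{(\mu_c+i,\mu_{c+1}-i)}$.

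Tensoring with the identity on the remaining factors preserves this equality of images, so $\Ima(\overline{\Psi}_{\mu(c)})$ coincides with the image of the $c$-th summand $1\otimes\cdots\otimes\theta_1\otimes\cdots\otimes 1$ of the map $\theta_\mu$ in Theorem \ref{cok1}. Summing over $c$ yields
\[
\Ima\Bigl(\sum_{c=1}^{\ell-1}\overline{\Psi}_{\mu(c)}\Bigr)=\Ima(\theta_\mu),
\]
so the two cokernels coincide, and by Theorem \ref{cok1} the common cokernel is $L_\mu$, as required. There is no serious obstacle beyond the bookkeeping: the substantive content has already been packaged into Theorem \ref{main} and Theorem \ref{cok1}, and the crucial enabling observation is that the multiplicity-freeness of the two-factor Pieri decomposition lets one identify submodules from knowledge of their cokernels, which is what promotes the two-row result to arbitrary $\ell$.
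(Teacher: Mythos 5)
Your proof is correct, and it follows essentially the same route as the paper's: apply Theorem \ref{main} at $i=0$ to each $\Psi_{\mu(c)}$, compare with $\theta_1$ and Theorem \ref{cok1}, then lift to $\ell$ factors and invoke Theorem \ref{cok1} again. The only difference is the mechanism for deducing $\Ima(\Psi_{\mu(c)})=\Ima(\theta_1)$: the paper uses the containment $\Ima(\Psi_{\mu(c)})\subseteq\Ima(\theta_1)$ from Remark \ref{subset} together with the fact that both have cokernel $L_{\mu(c)}$, whereas you observe that by Pieri multiplicity-freeness a submodule of $\Lambda^{\mu(c)}$ is uniquely determined by its cokernel, so both must equal $\bigoplus_{i\ge 1}L_{(\mu_c+i,\mu_{c+1}-i)}$; these are interchangeable one-line arguments, not a genuinely different strategy.
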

\begin{proof} From Remark \ref{subset} it follows that $\Ima(\sum_{c=1}^{{\ell}-1}\overline{\Psi}_{\mu(c)}) \subseteq \Ima\theta_\mu$, where $\theta_\mu$ is the map in Theorem \ref{cok1}. Hence $ \coker(\sum_{c=1}^{{\ell}-1}\overline{\Psi}_{\mu(c)}) \simeq L_\mu$ if and only if 
	$\Ima(\sum_{c=1}^{{\ell}-1}\overline{\Psi}_{\mu(c)}) = \Ima\theta_\mu$. 
	
	Again by Remark \ref{subset}, \begin{equation}\label{sub}\Ima({\Psi}_{\mu(c)}) \subseteq \Ima(\Lambda^{\mu_c +1} \otimes \Lambda^{\mu_{c+1} -1} \xrightarrow{\theta_1}\Lambda^{\mu_c}  \otimes \Lambda ^{\mu_{c+1}}).\end{equation} Under the hypothesis of the corollary, we apply  Theorem \ref{main} to  $\coker ({\Psi}_{\mu(c)})$ and we conclude that in (\ref{sub}) we have equality.  (Notice that for $i=0$ in Condition 1 of Theorem \ref{main}, we have $\sum_{t=0}^{i}a_tc_t(a,b,i)$ $=a_0$.) Hence $\Ima(\sum_{c=1}^{{\ell}-1}\overline{\Psi}_{\mu(c)}) = \Ima\theta_\mu$.
\end{proof}

\section{Schur modules and symmetrization of Garnir relations} Here we apply Corollary \ref{maincor} to particular maps $\psi_k \in \Hom_G(\Lambda^a \otimes \Lambda^b, \Lambda^a \otimes \Lambda^b)$ defined below that have an interesting combinatorial interpretation; they are obtained from the Garnir relations by a symmetrization process. As we mentioned in the Introduction, such  symmetrizations was first considered in \cite{BF} for $k=1$.

Throughout this section, let $a\ge b \ge k $ be positive integers.
\begin{defn}\label{maps2} \;
	\begin{enumerate} \item Define $\phi_k \in \Hom _G(\Lambda^a \otimes \Lambda^b, \Lambda^a \otimes \Lambda^b)$ as the composition
		\begin{align*}\Lambda^a \otimes \Lambda^b &\xrightarrow{\Delta_{a-k,k} \otimes \Delta_{k,b-k}} \Lambda^{a-k} \otimes \Lambda ^k \otimes \Lambda ^k \otimes \Lambda ^{b-k}\\&\xrightarrow{1 \otimes \tau \otimes 1} \Lambda^{a-k} \otimes \Lambda ^k \otimes \Lambda ^k \otimes \Lambda ^{b-k}\xrightarrow{m \otimes m}\Lambda^{a}  \otimes \Lambda ^{b},\end{align*}
		where $\tau : \Lambda^k \otimes \Lambda^k \to \Lambda^k \otimes \Lambda^k, \tau (w \otimes z)=z \otimes w$. 
		\item Define $\psi_k \in \Hom _G(\Lambda^a \otimes \Lambda^b, \Lambda^a \otimes \Lambda^b)$ by $\psi_k=\tbinom{b}{k}1_{\Lambda^a \otimes \Lambda ^b}-\phi_k.$ \end{enumerate}\end{defn}

\begin{rem}\label{ex}It is straightforward to verify that if $x\otimes y= e_{i_1} \cdots e_{i_a} \otimes e_{j_1} \cdots e_{j_b} $ is  a basis element of $\Lambda^a \otimes \Lambda^b$, then $\phi_k(x \otimes y)$ is obtained by exchanging any $k$ elements $x_1 <x_2 < \dots < x_k$ of  $\{e_{i_1}, \dots,  e_{i_a}\}$ with any $k$ elements $y_1<y_2 < \dots < y_k $ of $\{e_{j_1}, \dots , e_{j_b}\}$ and putting the $x_1, \dots, x_k$ (respectively, $y_1, \dots, y_k$) in the original positions  of the $y_1, \dots , y_k$ (respectively, $x_1, \dots, x_k$).\end{rem}

In terms of the basis of Corollary \ref{basis}, we have the following expressions.
\begin{lem}\label{exp}
	With the previous notation, \[
	\phi_k = \sum_{t=0}^{k}(-1)^{t}\tbinom{b-t}{k-t}\Phi_t \;\; \text{and} \;\;
	\psi_k = \sum_{t=1}^{k}(-1)^{t-1}\tbinom{b-t}{k-t}\Phi_t.\]
\end{lem}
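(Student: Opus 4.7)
The plan is to first establish, by unwinding Definition \ref{maps} via the Hopf-algebra structure of $\Lambda$, the inverse identity
\[
\Phi_t \;=\; \sum_{s=0}^{t}(-1)^s \binom{b-s}{t-s}\,\phi_s, \qquad 0 \le t \le b,
\]
and then invert this upper-triangular linear relation to obtain the stated formula for $\phi_k$. The formula for $\psi_k$ is then immediate: the $t=0$ term of the $\phi_k$ expression is $\binom{b}{k}\Phi_0=\binom{b}{k}\,1_{\Lambda^a\otimes\Lambda^b}$, so subtracting $\phi_k$ from $\binom{b}{k}\,1_{\Lambda^a\otimes\Lambda^b}$ cancels this term and flips the remaining signs.

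To derive the inverse identity, fix $x\in\Lambda^a$, $y\in\Lambda^b$ and write $\Delta_{t,b-t}(y)=\sum_u y_u(t)\otimes y'_u(b-t)$, so $\delta_t(x\otimes y)=\sum_u x y_u(t)\otimes y'_u(b-t)$. Applying $\theta_t$ requires $\Delta_{a,t}(xy_u(t))$; using formula (\ref{comul}) with $a_1=a$, $a_2=t$ and noting that the sign $(-1)^{(t-t_2)t_1}$ simplifies to $(-1)^{t_1}$ when $t_1+t_2=t$, I set $s=t_1$ and obtain
\[
\Delta_{a,t}\bigl(xy_u(t)\bigr)\;=\;\sum_{s=0}^{t}(-1)^{s}\sum_{v,w} x_v(a-s)\,\bigl(y_u(t)\bigr)_w(s)\otimes x'_v(s)\,\bigl(y_u(t)\bigr)'_w(t-s).
\]
Multiplying the last tensor slot by $y'_u(b-t)$ and summing over $u$, the iterated comultiplication of $y$ that appears is $\Delta_{s,\,t-s,\,b-t}(y)$. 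By coassociativity this factors as $\Delta_{s,b-s}$ of $y$ followed by $\Delta_{t-s,b-t}$ on the second slot, and the induced partial sum collapses via the elementary exterior-algebra identity $m\circ\Delta_{p,q}\big|_{\Lambda^{p+q}}=\binom{p+q}{p}\,1$ (with $p=t-s$, $q=b-t$), producing the factor $\binom{b-s}{t-s}$ and leaving the second slot of $\Delta_{s,b-s}(y)$. Comparing what remains with Definition \ref{maps2}, the $s$-summand equals $\phi_s(x\otimes y)$, which proves the inverse identity.

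To invert, substitute the inverse identity into the right-hand side of the claimed formula for $\phi_k$ and swap the order of summation:
\[
\sum_{t=0}^{k}(-1)^t\binom{b-t}{k-t}\Phi_t\;=\;\sum_{s=0}^{k}\phi_s \sum_{t=s}^{k}(-1)^{s+t}\binom{b-t}{k-t}\binom{b-s}{t-s}.
\]
Using the trinomial identity $\binom{b-s}{t-s}\binom{b-t}{k-t}=\binom{b-s}{k-s}\binom{k-s}{t-s}$, the inner $t$-sum equals $\binom{b-s}{k-s}\sum_{j=0}^{k-s}(-1)^{j}\binom{k-s}{j}$, which vanishes unless $s=k$, in which case it is $1$. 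The right-hand side therefore collapses to $\phi_k$, as required.

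\textbf{Main obstacle.} The delicate step is the Hopf-algebra bookkeeping in the computation of $\Phi_t$: tracking the signs produced by (\ref{comul}), refactoring the triple comultiplication of $y$ via coassociativity, and collapsing the interior via the identity $m\circ\Delta_{p,q}=\binom{p+q}{p}\,1$ on $\Lambda^{p+q}$. Once this is done, the inversion that converts the expression for $\Phi_t$ into the expression for $\phi_k$ is a routine binomial manipulation.
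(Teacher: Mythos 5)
Your proof is correct and follows essentially the same route as the paper: both first express $\Phi_t=\sum_{s=0}^{t}(-1)^s\binom{b-s}{t-s}\phi_s$ by unwinding $\theta_t\circ\delta_t$ via the Hopf-algebra structure, and then invert that upper-triangular relation (the paper cites the involution identity $\sum_j(-1)^{k+j}\binom{b-k}{b-j}\binom{b-j}{b-t}=\delta_{k,t}$ from Riordan, while you rederive it from the trinomial identity and the vanishing of the alternating binomial sum — a more self-contained presentation of the same routine step). The passage from $\phi_k$ to $\psi_k$ by cancelling the $t=0$ term is likewise identical.
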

\begin{proof} Let $ x \otimes y \in \Lambda^a \otimes \Lambda ^b$. From the definitions we have that $	\Phi_t(x\otimes y)=$ $ \theta_t\circ \delta _t (x \otimes y)=$ $\theta_t\big(\sum_vxy_v(t) \otimes y'_v(b-t)\big)$. Using  (\ref{comul}) and (\ref{comm}),  this is equal to \begin{equation*}
		\sum_{i=0}^{t}(-1)^{\epsilon}\tbinom{b-t+i}{i}\sum_{u,v}x_u(a-t+i)y_v(t-i) \otimes x'_u(t-i)y'_v(b-t+i),	\end{equation*} where $\epsilon = (t-i)^2$. Hence \begin{equation}\label{matrix}
		\Phi_t=\sum_{i=0}^{t}(-1)^{t-i}\tbinom{b-t+i}{i}\phi_{t-i}=\sum_{k=0}^{t}(-1)^{k}\tbinom{b-k}{b-t}\phi_{k},
	\end{equation}where in the last equality we set $k=t-i$. Consider the $(b+1) \times (b+1)$ matrix $A=(a_{kt})$, where $a_{kt}=(-1)^{k}\tbinom{b-k}{b-t}$ and $0 \le k,t \le b$. From the binomial identity $\sum_{j=0}^{b}(-1)^{k+j} \tbinom{b-k}{b-j} \tbinom{b-j}{b-t}=\delta_{k,t}$, see \cite[p. 4]{R}, where $\delta_{k,t}$ is Kronecker's delta,  it follows that $A^2=I$, the identity matrix. Hence we may invert (\ref{matrix}) to obtain $
	\phi_k = \sum_{t=0}^{k}(-1)^{t}\tbinom{b-t}{b-k}\Phi_t$ as desired. 
	
	The second equality follows from the first and Definition \ref{maps2}.
\end{proof}
From Lemma \ref{exp} and Theorem \ref{main} (for $i=0$) we obtain:
\begin{cor}\label{2rowscr}
	Let  $n \ge a+b$. Then $\coker(\psi_k) \simeq L_{(a,b)}$ if and only if \[\sum_{t=1}^{j}(-1)^{t-1}\tbinom{b-t}{b-k}\tbinom{j}{t}\tbinom{a-b+j+t}{t} \neq0\] for all $j=1,\dots,b$. 
\end{cor}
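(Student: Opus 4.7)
The corollary follows directly from Theorem \ref{main} taken at $i=0$, combined with the expansion of $\psi_k$ in the basis $\{\Phi_0, \dots, \Phi_b\}$ of $\Hom_G(\Lambda^a \otimes \Lambda^b, \Lambda^a \otimes \Lambda^b)$ that Corollary \ref{basis} furnishes and that Lemma \ref{exp} computes. Concretely, I would write $\psi_k = \sum_{t=0}^{b} a_t \Phi_t$ and extract the coefficients $a_t$ by inspection of Lemma \ref{exp}.

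From that lemma (after the symmetry identity $\tbinom{b-t}{k-t} = \tbinom{b-t}{b-k}$) one reads off $a_0 = 0$, $a_t = (-1)^{t-1}\tbinom{b-t}{b-k}$ for $1 \le t \le k$, and $a_t = 0$ for $t > k$. Because $a_0 = 0$, condition (1) of Theorem \ref{main} at $i = 0$ is automatic, and therefore $\coker(\psi_k) \simeq L_{(a,b)}$ is equivalent to condition (2): the nonvanishing of $\sum_{t=0}^{j} a_t c_t(a,b,j)$ for every relevant $j \ge 1$.

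Inserting the explicit formula $c_t(a,b,j) = \tbinom{j}{t}\tbinom{a-b+j+t}{t}$ from Lemma \ref{lemmamain} and using that $a_t$ vanishes for $t=0$ and for $t > k$, condition (2) transforms into precisely the stated inequality $\sum_{t=1}^{j}(-1)^{t-1}\tbinom{b-t}{b-k}\tbinom{j}{t}\tbinom{a-b+j+t}{t} \neq 0$; the upper summation index may be written as $j$ rather than $\min(j,k)$ because $\tbinom{b-t}{b-k}$ already vanishes for $t > k$. There is no serious obstacle here: Lemma \ref{exp} and Theorem \ref{main} do all the substantive work, and what remains is routine binomial bookkeeping.
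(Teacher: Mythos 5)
Your plan follows the paper's own one-line argument (Lemma~\ref{exp} plus Theorem~\ref{main} at $i=0$), and the coefficient extraction, the symmetry $\tbinom{b-t}{k-t}=\tbinom{b-t}{b-k}$, and your observation that the $t$-sum may run to $j$ since $\tbinom{b-t}{b-k}=0$ for $t>k$ are all fine. The gap is in the range of $j$. Theorem~\ref{main} with $i=0$ demands condition~(2) for \emph{every} $j\in\{1,\dots,b\}$, not merely for $j\in\{1,\dots,k\}$; your phrase ``every relevant $j\ge1$'' papers over this, and the final assertion that condition~(2) ``transforms into precisely the stated inequality'' for $j=1,\dots,k$ silently drops the cases $k<j\le b$. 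To match the corollary as written one would need to show that nonvanishing of $\Sigma(k,j)=\sum_{t=1}^{j}(-1)^{t-1}\tbinom{b-t}{b-k}\tbinom{j}{t}\tbinom{a-b+j+t}{t}$ for $1\le j\le k$ forces nonvanishing for $k<j\le b$; no such argument appears in your proposal (nor, as it happens, in the paper).

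In fact that truncation is not valid, so the missing step cannot be filled in. Take $a=b=8$, $k=2$. By Lemma~\ref{exp}, $\psi_2=7\Phi_1-\Phi_2$, and its scalar on the $j=5$ summand $L_{(13,3)}$ is $7\,c_1(8,8,5)-c_2(8,8,5)=7\cdot30-210=0$; hence $L_{(13,3)}$ survives into $\coker(\psi_2)$ and $\coker(\psi_2)\not\simeq L_{(8,8)}$, even though $\Sigma(2,1)=14\neq0$ and $\Sigma(2,2)=36\neq0$. The correct range in the conclusion is therefore $j=1,\dots,b$, which is exactly what Corollary~\ref{maincor} uses (there $j$ runs to $\mu_{c+1}$). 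When you invoke Theorem~\ref{main} you should either keep $j\le b$ explicitly or supply a lemma reducing it to $j\le k$ --- and the example above shows the latter is false in general.
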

\section{Schur modules and Garnir relations} Our main goal here is to obtain an analog of Corollary \ref{2rowscr} for maps $\gamma_k$ (in place of $\psi_k$) that correspond to Garnir relations. This is accomplished in Corollary \ref{cor2rowsGR} below. Except for Lemma 3.1, the present section is independent of the previous two. However, the main ideas and techniques are similar.

Throughout this section, let $a\ge b \ge k $ be positive integers.

\begin{defn}\label{mapsG} \;
	\begin{enumerate} \item Define $\beta_k \in \Hom _G(\Lambda^a \otimes \Lambda^{k} \otimes \Lambda^{b-k}, \Lambda^a \otimes \Lambda^b)$ as the composition
		\begin{align*}\Lambda^a \otimes \Lambda^k \otimes \Lambda^{b-k} &\xrightarrow{\Delta_{a-k,k} \otimes 1} \Lambda^{a-k} \otimes \Lambda ^k \otimes \Lambda ^k \otimes \Lambda ^{b-k}\\&\xrightarrow{1 \otimes \tau \otimes 1} \Lambda^{a-k} \otimes \Lambda ^k \otimes \Lambda ^k \otimes \Lambda ^{b-k}\xrightarrow{m \otimes m}\Lambda^{a}  \otimes \Lambda ^{b},\end{align*} \item Define $\gamma_k \in \Hom _G(\Lambda^a \otimes \Lambda^{k} \otimes \Lambda^{b-k}, \Lambda^a \otimes \Lambda^b)$
		by \[\gamma_k(x\otimes y\otimes z)= x \otimes yz-\beta_k(x\otimes y\otimes z).\] \end{enumerate}
\end{defn}

We will need the following lemma.
\begin{lem}\label{Garnirsubset} We have $\Ima (\gamma_k) \subseteq \Ima (\theta_1)$, where $\theta_1 :\Lambda^{a+1}\otimes\Lambda^{b-1} \to \Lambda^{a}\otimes\Lambda^{b}$ is given in Definition \ref{maps}.
\end{lem}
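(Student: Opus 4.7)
The plan is to exploit Theorem~\ref{cok1} in the $\ell = 2$ case, which identifies $\Ima(\theta_1 : \Lambda^{a+1} \otimes \Lambda^{b-1} \to \Lambda^{a} \otimes \Lambda^{b})$ with $\ker(\pi_{(a,b)})$. Under this identification, the desired containment $\Ima(\gamma_k) \subseteq \Ima(\theta_1)$ is equivalent to $\pi_{(a,b)} \circ \gamma_k = 0$, i.e., to verifying the Garnir-type relation
$$\pi_{(a,b)}(x \otimes yz) \;=\; \pi_{(a,b)}\!\left(\sum_u x_u(a-k)\, y \otimes x'_u(k)\, z\right)$$
in $L_{(a,b)}$, for all $x \in \Lambda^a$, $y \in \Lambda^k$, $z \in \Lambda^{b-k}$. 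The right-hand side is, by construction, $\pi_{(a,b)}(\beta_k(x\otimes y\otimes z))$.

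This is precisely the kind of identity Lemma~\ref{exch} is designed to produce. I would invoke that lemma with the substitutions $(X,Y,Z) = (y,x,z)$ and parameter choices $(a_1,b_1,a_2,b_2,a_3,b_3) = (0,k,a,0,0,b-k)$. The hypothesis $b_1 \le a_2+a_3$ becomes $k \le a$, which holds. With these values the trivial comultiplications force $X_u(0)=Y'_v(0)=Z_w(0)=1$, so the element $\omega$ of the lemma simplifies to $x \otimes yz$. Moreover the constraints $i_2+i_3 = b_1 = k$ and $i_3 \le a_3 = 0$ collapse the outer sum in the conclusion of Lemma~\ref{exch} to the single term $(i_2,i_3) = (k,0)$. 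After substitution the lemma yields
$$\pi_{(a,b)}(x \otimes yz) \;=\; (-1)^{(a+1)k}\,\pi_{(a,b)}\!\left(\sum_v y\cdot x_v(a-k) \otimes x'_v(k)\, z\right).$$

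The final step is to move $y$ past $x_v(a-k)$ in $\Lambda$, which introduces the sign $(-1)^{k(a-k)}$. The combined exponent is
$$(a+1)k + k(a-k) \;=\; 2ak + k - k^2 \;\equiv\; k(k-1) \;\equiv\; 0 \pmod{2},$$
since $k(k-1)$ is a product of consecutive integers. Hence all signs cancel and the desired relation drops out.

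I expect no serious obstacle here; the entire argument is a careful bookkeeping exercise once the correct specialization of Lemma~\ref{exch} is chosen. The only subtle point is the sign computation, and the crucial observation that makes it work is precisely that $k(k-1)$ is automatically even, so no residual sign survives to obstruct the identification with $\beta_k$.
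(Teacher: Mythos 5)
Your proposal is correct and follows the same strategy as the paper's own proof: reduce the containment to showing $\pi_{(a,b)} \circ \gamma_k = 0$ (which by Theorem~\ref{cok1} gives $\Ima(\gamma_k) \subseteq \ker(\pi_{(a,b)}) = \Ima(\theta_1)$), and then obtain the vanishing from Lemma~\ref{exch}. The paper states the application of Lemma~\ref{exch} very tersely; you have simply spelled out the specialization $(a_1,b_1,a_2,b_2,a_3,b_3) = (0,k,a,0,0,b-k)$ and the resulting sign cancellation, and your computation is accurate.
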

\begin{proof}
	Consider the composition \[\Lambda^a \otimes \Lambda^k \otimes \Lambda^{b-k}\xrightarrow{\gamma_k}\Lambda^{a} \otimes \Lambda^{b}\xrightarrow{\pi_\mu} L_\mu\]
	where $\mu=(a,b)$, and let $x \otimes y \otimes z \in \Lambda^a \otimes \Lambda ^k \otimes \Lambda^{b-k}$. We have \[\pi_\mu \circ  \gamma_k(x \otimes y \otimes z)=\pi_\mu(x\otimes yz)-\big(\sum_ux'_u(a-k)y\otimes x'_u(k)z\big)=0,\]
	where the second equality follows from Lemma \ref{exch}. Hence $\pi_\mu \circ  \gamma_k=0.$ By Theorem \ref{cok1} we have $\Ima (\gamma_k) \subseteq \ker(\pi_\mu) = \Ima(\theta_1).$
\end{proof}

For the proof of the next lemma we will need the following 
remark. Recall we have $a \ge b \ge k \ge 1$. Let $b \ge i \ge 0$ and define \[B(a,b,k,i)=\{(i_1,i_2) \in \mathbb{N} \times \mathbb{N}: i_1+i_2 = i, \; i_1 \le k, \; i_2 \le b-k\}.\]
It is straightforward to verify that \begin{equation}\label{rempq}
	B(a,b,k,i)=\{(t, i-t) \in \mathbb{N} \times \mathbb{N}: p \le t \le q\},
\end{equation} where $p=i-\min\{b-k,i\}$ and $q=\min\{k,i\}$.

\begin{lem}\label{thmmain22} Suppose $n \ge a+b.$ Define \[d_k(a,b,t)=1-\sum_{j=p}^{t}(-1)^{j}\tbinom{a-k+j}{a-k} \tbinom{b-k-i+t}{t-j}\] for $t=p, p+1, \dots, q$. Then $\coker(\gamma_k) \simeq L_{(a,b)}$ if and only if for every $i$ such that $1 \le i \le b$, there exists $t$ such that $p \le t \le q$ and $ d_k(a,b,t)\neq0$. 
\end{lem}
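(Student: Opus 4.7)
The plan is to reduce $\coker(\gamma_k) \simeq L_{(a,b)}$ to a collection of non-vanishing conditions. By Lemma~\ref{Garnirsubset} we have $\Ima(\gamma_k) \subseteq \Ima(\theta_1)$, and combining Theorem~\ref{cok1} with the Pieri decomposition $\Lambda^a \otimes \Lambda^b = \bigoplus_{j=0}^{b} L_{(a+j,b-j)}$ gives $\Ima(\theta_1) = \bigoplus_{j=1}^{b} L_{(a+j,b-j)}$; hence the cokernel condition is equivalent to $L_{(a+i,b-i)} \subseteq \Ima(\gamma_k)$ for every $i \in \{1,\ldots,b\}$. Setting $\mu = (a+i,b-i)$, the map $\pi_\mu \circ \delta_i : \Lambda^a \otimes \Lambda^b \to L_\mu$ is nonzero (as shown in the proof of Lemma~\ref{lemmamain}(2)), and by Schur's lemma it annihilates every other summand $L_{(a+j,b-j)}$ with $j \ne i$; thus it realises, up to a nonzero scalar, the projection onto the $L_\mu$-summand, and the containment is equivalent to $\pi_\mu \circ \delta_i \circ \gamma_k \ne 0$.

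To test non-vanishing I would evaluate this composition on the test element $x_0 \otimes y_0 \otimes z_0 := e_1 \cdots e_a \otimes e_{a+1} \cdots e_{a+k} \otimes e_{a+k+1} \cdots e_{a+b}$, available because $n \ge a+b$. Expanding $\gamma_k(x_0 \otimes y_0 \otimes z_0) = x_0 \otimes y_0 z_0 - \beta_k(x_0 \otimes y_0 \otimes z_0)$ and applying $\delta_i$ via (\ref{comul}), the first summand contributes, for each $t \in \{p,\ldots,q\}$, a single distinguished cosemistandard basis element $\mathcal{T}_t$ of $L_\mu$, namely the one whose first row is $\{1,\ldots,a\} \cup \{a+1,\ldots,a+t\} \cup \{a+k+1,\ldots,a+k+i-t\}$, with coefficient $\pm 1$; this accounts for the constant ``$1$'' in $d_k(a,b,t)$. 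For the second summand, expanding $\Delta_{a-k,k}(x_0)$ produces a sum over $k$-subsets $T \subseteq \{1,\ldots,a\}$, and applying $\delta_i$ further splits $e_T z_0$ via (\ref{comul}) into pieces parametrised by a $j$-subset $R_T \subseteq T$ and an $(i-j)$-subset $U_Z \subseteq \{a+k+1,\ldots,a+b\}$; for $T'' := T \setminus R_T$ fixed, the $\binom{a-k+j}{a-k}$ choices of $R_T$ inside $\{1,\ldots,a\}\setminus T''$ yield the first binomial factor.

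The $\beta_k$-terms are not cosemistandard because $T''$ sits in the second row; to put them on the cosemistandard basis I would apply Lemma~\ref{exch} to the full sum over $T''$ and $U_Z$, with $x=x_0$, $y=y_0$, $z=z_0$ and splitting parameters $a_1 = a-k+j$, $b_1=k-j$, $a_2=k$, $b_2=0$, $a_3=i-j$, $b_3=b-k-i+j$. The resulting expression, indexed by $(i_2,i_3)$ with $i_2+i_3=k-j$, contributes to $\mathcal{T}_t$ precisely when $i_2 = k-t$ and $i_3 = t-j$ (which forces $j \le t$, explaining the summation range $j \in \{p, \ldots, t\}$); the factor $\binom{b_3+i_3}{b_3}$ from Lemma~\ref{exch} then becomes $\binom{b-k-i+t}{t-j}$, the second binomial of $d_k(a,b,t)$. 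Collecting all contributions and signs, the coefficient of $\mathcal{T}_t$ in $\pi_\mu \circ \delta_i \circ \gamma_k(x_0 \otimes y_0 \otimes z_0)$ equals, up to a nonzero sign independent of $t$, the quantity $d_k(a,b,t)$; since distinct $\mathcal{T}_t$ are linearly independent in $L_\mu$, the composition is nonzero on the test element iff some $d_k(a,b,t) \ne 0$, which is the desired criterion. The main obstacle is the careful sign bookkeeping: the Koszul signs arising from each use of (\ref{comul}), the shuffle signs from multiplication in $\Lambda$, and the exponents $\epsilon, \epsilon'$ from Lemma~\ref{exch} must combine uniformly across all $t$ so as to produce the alternating factor $(-1)^j$ in the sum and the leading $+1$ with the same overall sign.
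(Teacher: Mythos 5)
Your strategy coincides with the paper's: compose $\gamma_k$ with $\delta_i$ and the projection $\pi_\mu$, evaluate on the test element $x_0\otimes y_0\otimes z_0=e_1\cdots e_a\otimes e_{a+1}\cdots e_{a+k}\otimes e_{a+k+1}\cdots e_{a+b}$, straighten the $\beta_k$-terms with Lemma~\ref{exch}, and read off the coefficients $d_k(a,b,t)$ on a family of distinguished cosemistandard basis elements. The reduction of the cokernel condition to ``$\pi_\mu\circ\delta_i\circ\gamma_k\neq 0$ for all $1\le i\le b$'' via Lemma~\ref{Garnirsubset} and multiplicity-one Pieri is correct, and the parameter bookkeeping in your application of Lemma~\ref{exch} (with $a_1=a-k+j$, $b_1=k-j$, $a_2=k$, $b_2=0$, $a_3=i-j$, $b_3=b-k-i+j$, and $i_2=k-t$, $i_3=t-j$) does reproduce both binomial factors in $d_k(a,b,t)$.

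There is, however, a logical gap in the ``only if'' direction. You test nonvanishing of $\pi_\mu\circ\delta_i\circ\gamma_k$ only on the single element $x_0\otimes y_0\otimes z_0$. If some $d_k(a,b,t)\neq 0$ this correctly forces the map to be nonzero. But if every $d_k(a,b,t)=0$, vanishing on one element does not by itself imply that the $G$-equivariant map $\pi_\mu\circ\delta_i\circ\gamma_k$ is the zero map: the Hom-space $\Hom_G(\Lambda^a\otimes\Lambda^k\otimes\Lambda^{b-k},L_\mu)$ generally has dimension greater than one, so the map is not determined by its value at a single point. The paper avoids this by first computing $\pi_\mu\circ\delta_i\circ\gamma_k(x\otimes y\otimes z)=\sum_{t=p}^{q}d_k(a,b,t)\,\eta_t\,T(t)$ for \emph{arbitrary} $x\otimes y\otimes z$; since the scalars $d_k(a,b,t)\eta_t$ are universal, their simultaneous vanishing kills the map identically, and the test element is only invoked afterwards to guarantee linear independence of the $T(t)$ for the converse implication. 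Your argument can be repaired by observing that $x_0\otimes y_0\otimes z_0$ is in fact a cyclic $G$-generator of $\Lambda^a\otimes\Lambda^k\otimes\Lambda^{b-k}$ when $n\ge a+b$ (its $GL_n$-orbit spans, by Zariski density of $GL_n$ in $M_n$ and the fact that decomposable tensors span), so an equivariant map vanishing on it vanishes everywhere --- but this step is essential to the ``only if'' implication, is actually used later in the proof of Corollary~\ref{cor2rowsGR}, and must be stated.
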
	

\begin{proof}
	
	Consider the composition	\[\Lambda^a \otimes \Lambda^k \otimes \Lambda^{b-k}\xrightarrow{\gamma_k}\Lambda^{a} \otimes \Lambda^{b}\xrightarrow{\delta_i}\Lambda^{a+i} \otimes \Lambda^{b-i}\xrightarrow{\pi_\mu} L_\mu,\]
	where $\mu=(a+i,b-i)$, and let $x \otimes y \otimes z \in \Lambda^a \otimes \Lambda ^k \otimes \Lambda^{b-k}$. A quick computation using the definitions of the maps (\ref{comul}) and (\ref{rempq}) yields \begin{equation}\label{42}\pi_\mu \circ \delta_i \circ \gamma_k (x\otimes y \otimes z) =  \sum_{j=p}^q \eta_j T(j)-\sum_{j=p}^q \epsilon_j\tbinom{a-k+j}{a-k}A(j),\end{equation}
	where $\eta_t=(-1)^{(k-t)(i-t)}$, $\epsilon_j=(-1)^{jp+(i-j)(k-j)}$ and
	\begin{align*}
		&T(j)=\pi_\mu \big( \sum_{v,w}xy_v(j)z_w(i-j)\otimes y'_v(k-j)z'_w(b-k-i+j)\big),\\
		&A(j)=\pi_\mu \big( \sum_{u,w}x_u(a-k+j)yz_w(i-j)\otimes x'_u(k-j)z'_w(b-k-i+j)\big).
	\end{align*}
	We apply Lemma \ref{exch} to $A(j)$ to obtain 
	\begin{equation*}
		A(j)=\zeta_j \sum_{t=j}^{q}\eta_t \tbinom{b-k-i+t}{t-j} T(t).
	\end{equation*}
	Substituting this in (\ref{42}) and noting that $\epsilon_j \zeta_j =(-1)^{j}$, we conclude that  
	\begin{equation}\label{43}\pi_\mu \circ \delta_i \circ \gamma_k (x\otimes y \otimes z) =  \sum_{t=p}^q d_k(a,b,t) \eta_t T(t).\end{equation}
	
	Now since $n \ge a+b$, we may consider the choice $x \otimes y \otimes z = e_1 \cdots e_a \otimes e_{a+1} \cdots e_{a+k} \otimes e_{a+k+1} \cdots e_{a+b} \in \Lambda^a \otimes \Lambda ^k \otimes \Lambda^{b-k}$. Since $a \ge b-i$, it follows from the definition of $T(j)$ above, that this choice has the property that $T(p), \dots , T(q)$ are distinct cosemistandard basis elements of $L_{(a+i,b-i)}$ (see the last paragraph of Section 2). By Pieri's rule we know that $\Lambda^a \otimes \Lambda^b$ contains a unique copy of $L_{(a+i,b-i)}$, for each $1 \le i \le b$. Hence from (\ref{43}) we conclude that the image $\Ima (\gamma_k)$ of the map $\gamma_k$ intersects non trivially the copy of $L_{(a+i,b-i)}$ inside $\Lambda^a \otimes \Lambda^b$ if an only if one of the integers $d_k(a,b,t) \eta_t$, where $t=p, \dots, q$, is nonzero. Since $\eta_t = \pm1$,   this is equivalent to one of the integers $d_k(a,b,t)$ being nonzero.
	
	In order to complete the proof, it remains to be shown that $\Ima (\gamma_k)$ does not intersect the (unique) copy of $L_{(a,b)}$ inside $\Lambda^a \otimes \Lambda^b$. Consider the composition \[\Lambda^a \otimes \Lambda^k \otimes \Lambda^{b-k}\xrightarrow{\gamma_k}\Lambda^{a} \otimes \Lambda^{b}\xrightarrow{\pi_{(a,b)}} L_{(a,b)}.\]
	In the proof of Lemma \ref{Garnirsubset} we saw that $\pi_{(a,b)} \circ  \gamma_k=0.$
\end{proof}

Next we observe that we may obtain a better version of the previous lemma, meaning that we may avoid the use of the expressions $d_k(a,b,t)$, if we are interested not on the action on irreducible summands but only when  $\coker(\gamma_k) \simeq L_{(a,b)}$ holds.
\begin{cor}\label{cor2rowsGR}
	Suppose $n \ge a+b$. Then $\coker(\gamma_k) \simeq L_{(a,b)}$ if and only if $a>k$ or  $a=b=k=1$.
\end{cor}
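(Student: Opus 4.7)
The plan is to apply Lemma~\ref{thmmain22}, which reduces the statement to checking: for every $i\in\{1,\dots,b\}$, does there exist $t$ with $p\le t\le q$ and $d_k(a,b,t)\neq 0$? The decisive step is to obtain a closed form for $d_k(a,b,t)$.

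First I would observe that whenever $0 \le j<p$, one has $t-j>t-p=b-k-i+t\ge 0$ (using $t\ge p$), so $\binom{b-k-i+t}{t-j}=0$. Hence the sum defining $d_k(a,b,t)$ may be extended to the range $0\le j\le t$ without change. Applying the standard identity $\sum_{j=0}^{t}(-1)^{j}\binom{n+j}{j}\binom{m}{t-j}=\binom{m-n-1}{t}$ with $n=a-k$ and $m=b-k-i+t$ yields $d_k(a,b,t)=1-\binom{b-a-i+t-1}{t}$. Since $t\le q\le i\le a-b+i$, the upper index $b-a-i+t-1$ is at most $-1$, so the negation formula $\binom{-N}{t}=(-1)^{t}\binom{N+t-1}{t}$ gives the closed form
\[
d_k(a,b,t)=1-(-1)^{t}\binom{a+i-b}{t}.
\]

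With this formula, the ``if'' direction splits into cases. Suppose first $a>k$. If $i\le b-k$, then $p=0$ and $q\ge 1$, so $t=1$ lies in $[p,q]$ and $d_k(a,b,1)=1+(a-b+i)\ge 2$. If $i>b-k$, I would take $t=p=i-b+k$; the strict inequality $k<a$ forces $p<a+i-b$, so $1\le p\le (a+i-b)-1$. Since $\binom{N}{j}\ge 2$ whenever $1\le j\le N-1$ and $N\ge 2$, we get $\binom{a+i-b}{p}\ge 2$, whence $(-1)^p\binom{a+i-b}{p}\neq 1$ and $d_k(a,b,p)\ne 0$. The case $a=b=k=1$ is a direct check: only $(i,t)=(1,1)$ arises and $d_1(1,1,1)=2$.

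For the ``only if'' direction, the only remaining possibility (since $a\ge b\ge k$) is $a=b=k\ge 2$. Here $\min\{b-k,i\}=0$ and $\min\{k,i\}=i$, so $p=q=i$ and $t=i$ is forced; the closed form then gives $d_k(a,b,i)=1-(-1)^{i}$, which vanishes for the even value $i=2\in\{1,\dots,a\}$. By Lemma~\ref{thmmain22} the cokernel is not isomorphic to $L_{(a,b)}$, completing the argument. The main technical obstacle is the Vandermonde-style simplification that produces the closed form; once that is in hand, the case analysis is essentially arithmetic.
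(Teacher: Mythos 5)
Your argument is correct and follows essentially the same route as the paper: both reduce via Lemma~\ref{thmmain22} to checking nonvanishing of some $d_k(a,b,t)$, treat the cases $a>k$ (splitting on whether $p=0$) and $a=b=k=1$, and for the converse take $i=2$ when $a=b=k>1$. The one genuine improvement in your write-up is the Vandermonde-plus-negation simplification $d_k(a,b,t)=1-(-1)^t\tbinom{a+i-b}{t}$, which the paper does not record; the paper instead evaluates $d_k$ directly at $t=p$ (a single-term sum) and at $t=1$, arriving at the same numbers.
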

\begin{proof} With the notation of the previous lemma, let $1 \le i \le b$. 
	
	(1) Suppose $a >k$. Let $p \neq 0$.	By definition, $d_k(a,b,p)=1-(-1)^{p}\tbinom{a-k+p}{a-k}$. Since $a > k$ we have $\tbinom{a-k+p}{a-k} \neq 1$ and hence $d_k(a,b,p) \neq 0$.
	
	Let $p =0$. Since $q=\min\{k,i\} \ge 1$, we may consider $d_k(a,b,t)$ for $t=1$. By definition, $d_k(a,b,1)=1-\tbinom{a-k}{a-k}\tbinom{b-k-i+1}{1}+\tbinom{a-k+1}{a-k}\tbinom{b-k-i+1}{0}=1+a-b+i.$ Since $a\ge b$, we have $d_k(a,b,1) \neq 0.$
	
	(2) Suppose $a=b=k=1$. Then $a=b=k=i=p=q=t=1$ and $d_1(a,b,1)=1-(-a)=2>0$.
	
	So for any $1 \le i \le b$ we have found a nonzero $d_k(a,b,t)$ for some $p \le t \le q$. By Lemma \ref{thmmain22}, $\coker(\gamma_k) \simeq L_{(a,b)}$.
	
	Conversely, suppose $a=b=k>1$. We will show that $\coker(\gamma_k)$ and $ L_{(a,b)}$ are not isomorphic. Indeed, we have $p=q=i$ and thus $t$ is uniquely determined, $t=i$. Choosing $i=2$, we have $d_k(a,b,t)=1-(-1)^2=0$. By Lemma \ref{thmmain22}, $\coker(\gamma_k)$ and  $L_{(a,b)}$ are not isomorphic. \end{proof}

We note that according to the previous corollary, we have $\coker(\gamma_k) \simeq L_{(a,b)}$ for all $a \ge b \ge k \ge 1$ except when $a=b=k>1$. In this exceptional case, we have $\coker(\gamma_k) \simeq S_2(\Lambda^k)$, the second symmetric power of $\Lambda^k$.

In order to state the main result of this section, that involves partitions with more parts, we need some notation. Let $\mu=(\mu_1, \dots, \mu_\ell)$ be a partition of length $\ell \ge 2$ and let $k_1,\dots, k_{\ell-1}$ positive integers satisfying $k_c \le \mu_{c+1}$, $c=1,\dots,\ell-1$.
Let $\mu(c)$ be the partition $(\mu_c, \mu_{c+1})$ and let \[\gamma_{k_c} \in \Hom _G(\Lambda^{\mu_c} \otimes \Lambda^{k_c} \otimes \Lambda^{\mu_{c+1}-k_c}, \Lambda^{\mu_c} \otimes \Lambda^{\mu_{c+1}})\] be the map of Definition \ref{mapsG}(2). Define \[\overline{\gamma}_{k_c} \in \Hom_G(\Lambda^{\mu_1} \otimes \cdots \otimes \Lambda^{\mu_c} \otimes \Lambda^{k_c} \otimes \Lambda^{\mu_{c+1}-k_c} \otimes \cdots \otimes \Lambda^{\mu_\ell},\Lambda^{\mu})\]  by $\overline{\gamma}_{k_c}=1 \otimes \cdots \otimes 1 \otimes \gamma_{k_c} \otimes 1 \otimes \cdots \otimes 1.$

\begin{thm}\label{maincorGarnir3} Let $\mu=(\mu_1, \dots, \mu_\ell)$ be a partition of length $\ell \ge 2$ and let $k_1,\dots, k_{\ell-1}$ positive integers satisfying $k_c \le \mu_{c+1}$, $c=1,\dots,\ell-1$. Suppose $n \ge |\mu|$. If $\mu_c > \mu_{c+1}$ for those $c$ such that $k_c=\mu_{c+1}>1$, then the cokernel of \[ \sum_{c=1}^{{\ell}-1}\overline{\gamma}_{k_c} :\bigoplus_{c=1}^{{\ell}-1} \Lambda^{\mu_1} \otimes \cdots \otimes \Lambda^{\mu_c} \otimes \Lambda^{k_c} \otimes \Lambda^{\mu_{c+1}-k_c} \otimes \cdots \otimes \Lambda^{\mu_\ell} \to \Lambda^{\mu}\] is isomorphic to $L_{\mu}$.\end{thm}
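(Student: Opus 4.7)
The plan is to mimic the argument of Corollary \ref{maincor}, substituting Corollary \ref{cor2rowsGR} for Theorem \ref{main} as the two-row input. The ultimate goal is to prove $\Ima\bigl(\sum_{c=1}^{\ell-1}\overline{\gamma}_{k_c}\bigr) = \Ima(\theta_\mu)$; Theorem \ref{cok1} then immediately yields $\coker\bigl(\sum_{c=1}^{\ell-1}\overline{\gamma}_{k_c}\bigr) \simeq L_\mu$.

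First I would establish the easy inclusion. Applying Lemma \ref{Garnirsubset} on the two-row tensor factor $\Lambda^{\mu_c} \otimes \Lambda^{\mu_{c+1}}$ and tensoring with identities on the remaining factors gives $\Ima(\overline{\gamma}_{k_c}) \subseteq \Ima(1 \otimes \cdots \otimes \theta_1 \otimes \cdots \otimes 1)$, where $\theta_1$ acts on the $(c,c+1)$ pair. Summing over $c$ yields $\Ima\bigl(\sum_{c=1}^{\ell-1}\overline{\gamma}_{k_c}\bigr) \subseteq \Ima(\theta_\mu)$, so the problem reduces to showing equality.

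Next I would verify, for each $c$, the hypothesis of Corollary \ref{cor2rowsGR} applied to $(a,b,k) = (\mu_c, \mu_{c+1}, k_c)$. There are three cases to consider: (i) $k_c < \mu_{c+1}$, (ii) $k_c = \mu_{c+1} > 1$, (iii) $k_c = \mu_{c+1} = 1$. In case (i), $\mu_c \ge \mu_{c+1} > k_c$ gives $a > k$. In case (ii), the standing hypothesis delivers $\mu_c > \mu_{c+1} = k_c$. In case (iii), either $\mu_c > 1 = k_c$, or $\mu_c = \mu_{c+1} = k_c = 1$, which is the exceptional case of Corollary \ref{cor2rowsGR}. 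Since $n \ge |\mu| \ge \mu_c + \mu_{c+1}$, in every case Corollary \ref{cor2rowsGR} applies and $\coker(\gamma_{k_c}) \simeq L_{(\mu_c, \mu_{c+1})}$.

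Finally I would upgrade this cokernel statement to an equality of images. Under the Pieri decomposition $\Lambda^{\mu_c} \otimes \Lambda^{\mu_{c+1}} = \bigoplus_{i=0}^{\mu_{c+1}} L_{(\mu_c+i,\mu_{c+1}-i)}$, Theorem \ref{cok1} identifies $\Ima(\theta_1)$ with $\bigoplus_{i \ge 1} L_{(\mu_c+i,\mu_{c+1}-i)}$. Since $K$ has characteristic zero, every submodule is a direct summand; combining the inclusion $\Ima(\gamma_{k_c}) \subseteq \Ima(\theta_1)$ with the cokernel isomorphism (which removes only the $i=0$ summand) forces $\Ima(\gamma_{k_c}) = \Ima(\theta_1)$. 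Tensoring with the identity on the complementary factors and summing over $c$ gives $\Ima\bigl(\sum_{c=1}^{\ell-1}\overline{\gamma}_{k_c}\bigr) = \Ima(\theta_\mu)$, as required. The genuine work has already been done in Corollary \ref{cor2rowsGR}; the main obstacle here is just the slightly delicate case analysis needed to reduce the multi-row hypothesis to the two-row criterion, which is handled above.
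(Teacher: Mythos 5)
Your proof is correct and follows essentially the same route as the paper: use Lemma \ref{Garnirsubset} for the inclusion $\Ima(\overline{\gamma}_{k_c}) \subseteq \Ima(\theta_\mu)$, invoke Corollary \ref{cor2rowsGR} on each two-row factor, and conclude equality of images by comparing cokernels via Theorem \ref{cok1}. You spell out the case analysis verifying the two-row hypothesis $(\mu_c > k_c$ or $\mu_c=\mu_{c+1}=k_c=1)$, which the paper leaves implicit in the phrase ``under the assumptions of the theorem,'' but there is no substantive difference in method.
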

\begin{proof}From Lemma \ref{Garnirsubset} we have \begin{equation}\label{imsub}\Ima( \overline{\gamma}_{k_c}) \subseteq \Ima(\Lambda^{\mu_1} \otimes \cdots \otimes \Lambda^{\mu_c+1}\otimes \Lambda^{\mu_{c+1}-1} \otimes \cdots \otimes \Lambda^{\mu_{\ell}} \xrightarrow{1\otimes \cdots \otimes\theta_1 \otimes \cdots \otimes 1} \Lambda^{\mu}).\end{equation}
	Under the assumptions of the theorem, Corollary \ref{cor2rowsGR} says that $\coker(\overline{\gamma}_{k_c}) \simeq \coker (1\otimes \cdots \otimes\theta_1 \otimes \cdots \otimes 1)$. Hence in (\ref{imsub}) we have equality for all $c$. Thus \[\coker\big( \sum_{c=1}^{{\ell}-1}\overline{\gamma}_{k_c}\big) \simeq \coker(\theta_\mu) \simeq L_\mu,\]
	where the last isomorphism is due to Theorem \ref{cok1}.\end{proof}

\section{Presentations of Specht modules}
In this section we consider consequences of our previous results for Specht modules and discuss the relationship with previous known results, especially \cite{BF} and \cite{FHW}. 

Suppose $n \ge r$. The Schur functor is a functor $f$ from the category $M_K(n,r)$ of homogeneous polynomial representations of  $G$ of degree $r$ to the category $\mod \mathfrak{S}_r$ of left modules over the symmetric group $\mathfrak{S}_r$. For $M \in M_K(n,r)$,  $f(M)$ is the weight subspace $M_\alpha$ of $M$, where $\alpha = (1^r, 0^{n-r})$; for $\theta :M \to M'$ a morphism in $M_K(n,r)$, $f(\theta)$  is the restriction $M_\alpha \to M'_\alpha$ of $\theta$. We recall the following well known facts \cite{Gr}. Let $\mu$ be a partition of $r$ and $\lambda = \mu'$.  \begin{enumerate}
	\item $f$ is an exact functor, 
	\item $f(L_{\mu})=S^{\lambda}$ and $f(\Lambda^{\mu}) = M^{\lambda}$,  
	\item $f$ induces an isomorphism $\Hom_G(\Lambda^{\mu}, \Lambda^{\mu}) \simeq \Hom_{\mathfrak{S}_r}(M^{\lambda}, M^{\lambda})$.
\end{enumerate}
In fact, since the characteristic of $K$ is zero, $f$ is an equivalence of categories. 
%%%mexri edw%%%%	

%	\begin{rem}\label{other}Suppose $n \ge r$, where $r=a+b$ in Definition \ref{maps2}. It %follows from the above that \begin{enumerate} \item[(1)] by applying the Schur functor to %the map $\psi_1 : \Lambda^a \otimes \Lambda^b \to \Lambda^a \otimes \Lambda^b$  we obtain %the operator $\eta$ of \cite[Definition 3.1]{BF}, and
		%			\item[(2)] by applying the Schur functor to the map $\psi_b : \Lambda^a \otimes %\Lambda^b \to \Lambda^a \otimes \Lambda^b$ we obtain the operator $\phi$ of \cite{FHW} %defined before Lemma 2.3. \end{enumerate}\end{rem}	
\subsection{Specht modules and symmetrizations of Garnir relations}Consider the setup and notation of the first two paragraphs of Section \ref{results}. It follows from the above, Remark \ref{ex} and (\ref{newrel}) that when we apply the Schur functor $f$ to $\psi_k \in \Hom _G(\Lambda^a \otimes \Lambda^b, \Lambda^a \otimes \Lambda^b)$ we obtain the map $f(\psi_k) \in \Hom_{\mathfrak{S}_{a+b}}(M^{(a,b)'}, M^{(a,b)'})$ that satisfies $f(\psi_k)(\overline{T})=r^{T}_{1,k}$ for all $\overline{T} \in M^{(a,b)'}$. Hence the image is \begin{equation}\label{psiSGR}\Ima (f(\psi_k)) = SGR^{(a,b)'}(k).\end{equation}Thus from Corollary \ref{2rowscr} we conclude the following.
\begin{cor}\label{rowsSp}
	Let $a,b,k \in \mathbb{N}$ such that $a \ge b \ge k \ge 1$. Then as $\mathfrak{S}_{a+b}$-modules $M^{(a,b)'} /SGR^{(a,b)'}(k) \simeq S^{(a,b)'}$ if and only if  $\sum_{t=1}^{j}(-1)^{t-1}\tbinom{b-t}{b-k}\tbinom{j}{t}\tbinom{a-b+j+t}{t} \neq0$ for all $j=1,\dots,b$. 
\end{cor}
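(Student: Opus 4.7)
The plan is to transfer Corollary \ref{2rowscr} across the Schur functor. By enlarging $n$ if necessary, assume $n \ge a+b$; the three properties of $f$ recalled at the start of Section 6 are then available, and in fact $f$ is an equivalence of categories.

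First, apply the exact functor $f$ to the tail exact sequence $\Lambda^a \otimes \Lambda^b \xrightarrow{\psi_k} \Lambda^a \otimes \Lambda^b \to \coker(\psi_k) \to 0$. Using $f(\Lambda^{(a,b)}) = M^{(a,b)'}$ together with the identification $\Ima(f(\psi_k)) = SGR^{(a,b)'}(k)$ from (\ref{psiSGR}), this produces a canonical isomorphism
\[M^{(a,b)'}/SGR^{(a,b)'}(k) \;\simeq\; f(\coker(\psi_k)).\]

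The biconditional then follows directly from Corollary \ref{2rowscr}. For the forward direction, the arithmetic condition gives $\coker(\psi_k) \simeq L_{(a,b)}$, and applying $f$ together with $f(L_{(a,b)}) = S^{(a,b)'}$ finishes. For the converse, $\coker(\psi_k)$ is a quotient of the semisimple module $\Lambda^a \otimes \Lambda^b = \bigoplus_{i=0}^b L_{(a+i,b-i)}$ (Pieri), so it decomposes as a direct sum of some subset of the $L_{(a+i,b-i)}$; applying $f$ yields a direct sum of pairwise non-isomorphic Specht modules $S^{(a+i,b-i)'}$, and matching against $S^{(a,b)'}$ forces $\coker(\psi_k) \simeq L_{(a,b)}$, whence Corollary \ref{2rowscr} supplies the arithmetic condition. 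I do not anticipate a serious obstacle: the analytic content lives in Section 3, and the present step is essentially the bookkeeping that turns a $G$-statement into an $\mathfrak{S}_r$-statement. The only delicate point is the reflection-of-isomorphisms in the converse direction, which is handled either by the semisimplicity argument above or by appealing to the fact that $f$ is an equivalence of categories when $n \ge r$.
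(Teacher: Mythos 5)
Your proposal is correct and matches the paper's approach: the paper likewise obtains this corollary by applying the Schur functor to Corollary \ref{2rowscr}, using exactness together with the identification $\Ima(f(\psi_k)) = SGR^{(a,b)'}(k)$ from (\ref{psiSGR}). Your careful handling of the converse direction (via semisimplicity or via $f$ being an equivalence of categories when $n \ge r$) spells out exactly the reflection-of-isomorphisms fact the paper leaves implicit.
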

The 'if' direction of the above corollary generalizes to any partition $\lambda$ of $r$ in place of $(a,b)'$. We consider this as our second main result.
\begin{thm}\label{mainSpe}
	Let $\lambda$ be a partition of $r$ and $k_1,\dots, k_{\lambda_1-1}$ positive integers satisfying $k_c \le \mu_{c+1}$, $c=1,\dots,\lambda_1-1$, where $\mu = \lambda'$. Then as $\mathfrak{S}_r$-modules we have $M^{\lambda} / SGR^\lambda(k_1,\dots,k_{\lambda_1-1}) \simeq S^\lambda$ if \[\sum_{t=1}^{j}(-1)^{t-1}\tbinom{\mu_{c+1}-t}{\mu_{c+1}-k_c}\tbinom{j}{t}\tbinom{\mu_{c}-\mu_{c+1}+j+t}{t} \neq0\] for all $c=1,\dots,\lambda_1-1$ and  $j=1,\dots,\mu_{c+1}$.
\end{thm}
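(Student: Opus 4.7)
The plan is to work on the general linear group side via the Schur functor $f$ and then apply Corollary \ref{maincor} to the specific operators $\psi_{k_c}$ of Definition \ref{maps2}. Fix $n \ge r$ and set $\mu = \lambda'$, $\ell = \lambda_1$. For each $c = 1,\ldots,\ell-1$ I form $\overline{\psi}_{k_c} = 1 \otimes \cdots \otimes 1 \otimes \psi_{k_c} \otimes 1 \otimes \cdots \otimes 1 \in \Hom_G(\Lambda^{\mu},\Lambda^{\mu})$, with $\psi_{k_c}$ acting on the $c$-th and $(c{+}1)$-th tensor factors, and assemble them into the $G$-equivariant map $\Psi = \sum_{c=1}^{\ell-1}\overline{\psi}_{k_c} : \bigoplus_{c=1}^{\ell-1}\Lambda^{\mu} \to \Lambda^{\mu}$.

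The first step is a combinatorial identification: using Remark \ref{ex} together with Definition \ref{maps2}(2), one checks that $f(\overline{\psi}_{k_c})(\overline{T}) = r^{T}_{c,k_c}$ for every column tabloid $\overline{T} \in M^{\lambda}$. This is the principal subtlety of the argument, as it requires tracking the signs produced by the comultiplication--swap--multiplication composition defining $\phi_{k_c}$ against those absorbed by the antisymmetry of column tabloids when a swapped tableau is rewritten in column-increasing form. Granting this identification, the image of $f(\Psi)$ equals $SGR^{\lambda}(k_1,\ldots,k_{\lambda_1-1})$, and since $f$ is exact with $f(L_\mu) = S^\lambda$, the theorem reduces to establishing $\coker(\Psi) \simeq L_\mu$.

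The second step is an application of Corollary \ref{maincor} with the choice $\Psi_{\mu(c)} = \psi_{k_c}$. By Lemma \ref{exp} we have $\psi_{k_c} = \sum_{t=1}^{k_c}(-1)^{t-1}\binom{\mu_{c+1}-t}{k_c-t}\Phi_{\mu(c),t}$, so in the notation of that corollary $a_{\mu(c),0} = 0$ and $a_{\mu(c),t} = (-1)^{t-1}\binom{\mu_{c+1}-t}{\mu_{c+1}-k_c}$ for $t \ge 1$ (vanishing for $t > k_c$). Substituting $c_t(\mu_c,\mu_{c+1},j) = \binom{j}{t}\binom{\mu_c - \mu_{c+1} + t + j}{t}$ from Lemma \ref{lemmamain}, the nonvanishing condition $\sum_{t=1}^{j} a_{\mu(c),t}\, c_t(\mu_c,\mu_{c+1},j) \neq 0$ of Corollary \ref{maincor} unfolds into precisely the arithmetic hypothesis of the theorem. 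The corollary then yields $\coker(\Psi) \simeq L_\mu$, and applying $f$ delivers the desired presentation $M^{\lambda}/SGR^{\lambda}(k_1,\ldots,k_{\lambda_1-1}) \simeq S^\lambda$.
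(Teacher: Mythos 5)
Your proof is correct and follows essentially the same route as the paper's: identify $f(\overline{\psi}_{k_c})$ with the operator producing $r^T_{c,k_c}$ so that $\Ima f(\sum_c\overline{\psi}_{k_c})=SGR^\lambda(k_1,\dots,k_{\lambda_1-1})$, then read off the coefficients $a_{\mu(c),t}$ from Lemma \ref{exp} and feed them into Corollary \ref{maincor}. The paper compresses the combinatorial identification into the earlier equation (\ref{psiSGR}), while you rightly flag it as the main point requiring care, but the substance is the same.
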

\begin{proof} This follows by applying the Schur functor to Corollary \ref{maincor} for the choice $\Psi_{\mu(c)} = \psi_{k_c} \in \Hom_G(\Lambda^{\mu_c} \otimes \Lambda^{\mu_{c+1}},  \Lambda^{\mu_c} \otimes \Lambda^{\mu_{c+1}})$, $c=1,\dots,\lambda_1 -1$. From (\ref{psiSGR}) we have $\Ima (f(\sum_{c=1}^{{\ell}-1}\overline{\psi}_{k_c})) = SGR^{\lambda}(k_1, \dots, k_{\lambda_1-1})$. We know from the second statement of Lemma \ref{exp} that $a_{\mu (c),t} = (-1)^{t-1}\tbinom{\mu_{c+1}-t}{\mu_{c+1}-k_c}$ if $t \neq 0$ and $a_{\mu (c),t} =0$ if $t=0$.
\end{proof}
\begin{exs}\label{exs} For short, let \[\Sigma(k,j)=\sum_{t=1}^{j}(-1)^{t-1}\tbinom{b-t}{b-k}\tbinom{j}{t}\tbinom{a-b+j+t}{t}\] be the sum in Corollary \ref{rowsSp},  where $1 \le j \le b$. 
	
	(1) Let $k=1$. Since $\tbinom{b-t}{b-k}=0$ if $t>1$, we have  $\Sigma(1,j)=j(a-b+j+1)$ for every $j=1, \dots, b$. Since $a\ge b$, we have $\Sigma(1,j) \neq 0$ . From Theorem \ref{mainSpe} we conclude that for any partition $\lambda$ we have 
	\[M^{\lambda} / SGR^\lambda(1,\dots,1) \simeq S^\lambda.\]
	Thus we recover  Theorem 3.5  of \cite{BF} (which we have stated as Theorem \ref{BF} in the Introduction).
	
	(2) Let $k=b$ the maximal value of $k$. First we note that in this case there is no symmetrization, i.e. we have $SGR^{(a,b)'}{(b)} =GR^{(a,b)'}(b)$. Using the first identity of \cite[Lemma 4.1(2)]{MS3}, we conclude that $\Sigma(b,j) =-(-1)^j \tbinom{a-b+j}{j}+1.$
	Taking into account that $j \neq 0$, we see that  the left hand side is equal to $0$ if and only if $j>0$ is even and $a=b$. Since $j\le b$, it follows from Corollary \ref{rowsSp} that  $M^{(a,b)'}/ GR^{(a,b)'}(b) =S^{(a,b)'}$ if and only if $b=1$ or $b>1$ and $a \neq b$. Hence from Theorem \ref{mainSpe} we conclude that \[M^{\lambda} / GR^\lambda(\lambda'_2,\dots,\lambda'_{\lambda_1}) \simeq S^\lambda,\]
	for any partition $\lambda$ satisfying $\lambda'_c > \lambda'_{c+1}$ for those $c$ for which $\lambda'_{c+1} >1$. Thus we recover Theorem 1.1  of \cite{FHW} (which we have stated as Theorem \ref{FHW} in the Introduction).
	
	(3) Let $k=2$. Then for every $j=1, \dots b$ we have \[\Sigma(2,j)=(b-1)j(a-b+j+1)-\tbinom{j}{2}\tbinom{a-b+j+2}{2}.\] A quick computation yields $\Sigma(2,j)=0$ if and only if $4(b-1)-(j-1)(a-b+j+2)=0$. From Corollary \ref{rowsSp}, $M^{(a,b)'}/SGR^{(a,b)'}{(2,2)} \simeq S^{(a,b)'}$ if and only if $4(b-1)-(j-1)(a-b+j+2)\neq 0$ for all $j=1,\dots,b$. According to Theorem \ref{mainSpe}, if $\lambda$ is a partition with all column lengths greater than or equal to 2, then  \[M^{\lambda} / SGR^\lambda(2,\dots,2) \simeq S^\lambda\]
	if $4(\lambda'_{c+1}-1)-(j-1)(\lambda'_{c}-\lambda'_{c+1}+j+2)\neq 0$ for all $c=1,\dots,\lambda_{1}-1$ and $j=1, \dots, \lambda'_{c+1}$.
\end{exs}				

In the second of the above examples we observed that the expression for the sum $\Sigma(k,j)$ takes on a simple form when $k=b$. We are not aware of a simplification valid for all $k$.

\subsection{Specht modules and Garnir relations}
Recall that the subspace of Garnir relations \\$GR^\lambda(k_1,\dots,k_{\lambda_1-1}) \subseteq M^\lambda$ was defined in the Introduction, after Theorem 1.3. 

Consider the map $\gamma_k \in \Hom _G(\Lambda^a \otimes \Lambda^{k} \otimes \Lambda^{b-k}, \Lambda^a \otimes \Lambda^b)$ of Definition \ref{mapsG}(2). It is straightforward to verify that its image $f(\gamma_k)$ under the Schur functor $f$ has the property that the vector space $\Ima (f(\gamma_k)) \subseteq M^{(a,b)'}$ is spanned by the Garnir relations $g_{1,k}^T$ of (\ref{GarRel}) as $T$ varies over $\mathcal{T}_\lambda$, where $\lambda = (a,b)'$. Thus from  and Corollary \ref{cor2rowsGR} and Theorem \ref{maincorGarnir3}  we obtain the following, which we regard as our third main result of the paper.
\begin{thm}\label{thmmainSpGR} Let  $\lambda$ be a partition of $r$ and $\mu=(\mu_1, \dots, \mu_\ell)=\lambda'$.
	\begin{enumerate}
		\item Suppose $\ell =2$ and  $k$ is a positive integer such that $k \le \mu_2$.  Then as $\mathfrak{S}_{r}$-modules we have $M^{\lambda} / GR^\lambda(k) \simeq S^\lambda$ if and only if $\mu_1>k$ or  $\mu_1=\mu_2=k=1$.
		\item Suppose $k_1,\dots, k_{\ell-1}$ are positive integers such that $k_c \le \mu_{c+1}$, $c=1,\dots,\ell-1$. Then as $\mathfrak{S}_{r}$-modules we have
		$M^{\lambda} / GR^\lambda(k_1,\dots,k_{\ell-1}) \simeq S^\lambda$ if  $\mu_c > \mu_{c+1}$ for those $c$ for which $k_c=\mu_{c+1}>1$.\end{enumerate}
\end{thm}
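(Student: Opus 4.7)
The plan is to deduce both parts by pushing the $G$-module statements of Section 5 through the Schur functor $f$. I will fix $n \ge r$ so that $f$ is defined and, in characteristic zero, is an equivalence of categories; hence $f$ preserves both isomorphism and non-isomorphism classes. Recall $f(\Lambda^\mu) = M^\lambda$ and $f(L_\mu) = S^\lambda$, and $f$ is exact.

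The first step is to identify $\Ima(f(\gamma_k))$ with $GR^\lambda(k)$ as subspaces of $M^\lambda$ when $\lambda = (a,b)'$. On the weight $(1^r)$ subspace, a basis element of $\Lambda^a \otimes \Lambda^k \otimes \Lambda^{b-k}$ has the form $e_I \otimes e_J \otimes e_L$, with $I, J, L$ pairwise disjoint subsets of $\{1,\dots,r\}$ of sizes $a, k, b-k$. Chasing through Definition \ref{mapsG}, $\gamma_k$ sends this element to $e_I \otimes e_J e_L$ minus the sum, over all $k$-subsets of the entries of $e_I$, of the result of swapping that $k$-subset with $e_J$. After passing to $M^\lambda$, where transpositions within a column introduce signs, this is precisely the Garnir relation $g_{1,k}^T$ for the tableau $T$ whose first column is $I$ (in increasing order) and whose second column is $J$ stacked above $L$. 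As $I, J, L$ vary, every tableau arises, so $\Ima(f(\gamma_k)) = GR^\lambda(k)$. The one delicate point is the sign bookkeeping between the transposition $\tau$ inside $\gamma_k$ and the column-transposition signs defining $M^\lambda$; this is the main obstacle, but it is a routine verification using the sign conventions already employed in computations like (\ref{comul}).

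For part (1), applying the exact functor $f$ to $\gamma_k$ yields $M^\lambda / GR^\lambda(k) \simeq f(\coker(\gamma_k))$. By Corollary \ref{cor2rowsGR}, $\coker(\gamma_k) \simeq L_{(a,b)}$ holds if and only if $\mu_1 > k$ or $\mu_1 = \mu_2 = k = 1$, and since $f$ is an equivalence of categories the same ``if and only if'' holds after applying $f$. For part (2), the analogous identification $\Ima\big(f\big(\sum_{c=1}^{\ell-1}\overline{\gamma}_{k_c}\big)\big) = GR^\lambda(k_1,\dots,k_{\ell-1})$ follows from the first step applied column by column, since the extra tensor factors $\Lambda^{\mu_d}$ (for $d \neq c, c+1$) are inert under both $\overline{\gamma}_{k_c}$ and $f$. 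Under the stated hypothesis that $\mu_c > \mu_{c+1}$ whenever $k_c = \mu_{c+1} > 1$, Theorem \ref{maincorGarnir3} gives $\coker\big(\sum_c \overline{\gamma}_{k_c}\big) \simeq L_\mu$; applying $f$ yields $M^\lambda / GR^\lambda(k_1,\dots,k_{\ell-1}) \simeq S^\lambda$.
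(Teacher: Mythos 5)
Your proposal is correct and follows essentially the same route as the paper: identify $\Ima(f(\gamma_k))$ with $GR^\lambda(k)$ by a weight-space computation (the paper likewise treats this as a straightforward verification), then transfer Corollary \ref{cor2rowsGR} and Theorem \ref{maincorGarnir3} through the Schur functor, using the equivalence of categories to preserve both the isomorphism and the non-isomorphism in part (1).
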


\begin{rems}
	(1) Suppose $k_1=\dots=k_{\ell-1}=1$. Then the hypothesis ''$\mu_c > \mu_{c+1}$ for those $c$ for which $k_c=\mu_{c+1}>1$'' is trivially satisfied and we conclude from part (2) of the above theorem that $ M^{\lambda} / GR^\lambda(1,\dots,1) \simeq S^\lambda $. Thus we recover the classical presentation of $S^\lambda$ in characteristic zero that we mentioned at the end of the third paragraph of the Introduction (see Exercise 16 of \cite[7.4]{F}).
	
	(2) We note that part (2) of the above theorem shows that the conclusion of \cite[Theorem 1.1]{FHW} holds not only when $k_c=\mu_{c+1}$ for all $c$, but for any choice of $k_c$.
	
	(3) Theorem \ref{thmmainSpGR} provides an answer to the question posed at the end of \cite{FHW}.
\end{rems}

\subsection*{Acknowledgment}
We are indebted to Michelle Wachs for helpful comments concerning \cite{FHW}. Also, we would like to thank the referee for helpful suggestions.

% ------------------------------------------------------------------------
\end{document}